\newtheorem{theorem}{Theorem}[section]
\newtheorem{lemma}[theorem]{Lemma}
\newtheorem{corollary}[theorem]{Corollary}
\newtheorem{proposition}[theorem]{Proposition}
\renewcommand{\leq}{\leqslant}
\renewcommand{\geq}{\geqslant}
\theoremstyle{definition}
\newtheorem{example}[theorem]{Example}
\theoremstyle{definition}
\numberwithin{equation}{section}
\renewcommand{\leq}{\leqslant}
\renewcommand{\geq}{\geqslant}
\numberwithin{equation}{section} \numberwithin{figure}{section}
\title{Collapsing of negative  K\"{a}hler-Einstein metrics}
 \author[Y. Zhang]{Yuguang Zhang}
 \thanks{The  author is supported in part by  grant NSFC-11271015.}
\address{Yau Mathematical Sciences Center,  Tsinghua University,  Beijing 100084, P.R.China.}
\email{yuguangzhang76@yahoo.com}
\begin{document}
\begin{abstract}
   In this paper, we study the collapsing behaviour of negative K\"{a}hler-Einstein metrics along degenerations of canonical polarized manifolds. We prove that for a  toroidal degeneration of canonical polarized manifolds with the total space $\mathbb{Q}$-factorial, the K\"{a}hler-Einstein metrics on fibers  collapse  to a lower  dimensional complete  Riemannian manifold in the pointed Gromov-Hausdorff sense by suitably choosing the base points. Furthermore, the most collapsed  limit  is a
    real affine K\"{a}hler  manifold.
\end{abstract}
\maketitle
\section{Introduction}
Let  $X$ be  a complex  projective $n$-manifold. We call $X$ a  canonical polarized manifold   if  the  canonical bundle $\mathcal{K}_{X}$ of $X$  is  ample,  and call  $X$  a Calabi-Yau manifold if $\mathcal{K}_{X}$ is trivial.  The Calabi conjecture of the  existence of K\"{a}hler-Einstein metrics   was solved  by Aubin and Yau in the case of canonical polarized manifolds (cf. \cite{aubin,Yau1}), and by Yau for  Calabi-Yau manifolds (cf. \cite{Yau1}).
 More precisely, on a canonical polarized manifold $X$,
 there exists a unique K\"{a}hler-Einstein metric $\omega$ with $\omega\in 2\pi  c_{1} (\mathcal{K}_{X})$ and  negative Ricci curvature, i.e. $${\rm Ric}(\omega)=-\omega,$$  by  \cite{aubin,Yau1}.  On a Calabi-Yau manifold, there are Ricci-flat K\"{a}hler-Einstein metrics by \cite{Yau1}.
   The goal of this paper is to  study the collapsing behaviour of families of   negative K\"{a}hler-Einstein metrics along degenerations in algebro-geometric sense.

   A degeneration of projective   $n$-manifolds $\pi: \mathcal{X}\rightarrow \Delta$ is a flat morphism from a normal Gorenstein   variety $\mathcal{X}$ of dimension $n+1$  to a disc $ \Delta\subset \mathbb{C}$ such that   $X_{t}=\pi^{-1}(t)$, $t\in \Delta^{\ast}=\Delta\backslash \{0\}$,  is smooth except  the central fiber   $X_{0}=\pi^{-1}(0)$.  We denote $X_{0}=\bigcup\limits_{i=1}\limits^{l}X_{0,i}$ and $X_{0,I}=\bigcap\limits_{i\in I} X_{0,i}$, where $X_{0,i}$, $i=1, \cdots, l$, is a irreducible component, and $I\subset \{1, \cdots, l\}$.  If the relative canonical bundle  $\mathcal{K}_{\mathcal{X}/ \Delta}=\mathcal{K}_{\mathcal{X}}\otimes \mathcal{K}_{\Delta}^{-1}$ is relatively  ample, then for any smooth fiber $X_{t}$,   the  canonical bundle $\mathcal{K}_{X_{t}}\cong \mathcal{K}_{\mathcal{X}/ \Delta}|_{X_{t}}$  is ample, and thus $X_{t}$ is a canonical polarized manifold. We call such degeneration   $\pi: \mathcal{X}\rightarrow \Delta$ a canonical polarized degeneration.

 In  \cite{SYZ}, Strominger, Yau and Zaslow proposed  a  conjecture, so called SYZ conjecture,  for constructing mirror Calabi-Yau manifolds via dual special lagrangian fibration.  Later, a new version of the  SYZ conjecture was proposed by   Kontsevich,  Soibelman, Gross and Wilson
    (cf. \cite{GW,KS,KS2}) by using   the collapsing of Ricci-flat K\"{a}hler-Einstein metrics.
  Let   $\mathcal{X} \rightarrow \Delta $ be   a degeneration of  Calabi-Yau $n$-manifolds, i.e. the relative canonical bundle $\mathcal{K}_{\mathcal{X} / \Delta} $ is trivial, and $0\in \Delta$ be  a large complex limit  point (cf. \cite{GHJ}).
   The collapsing   version of SYZ conjecture  asserts that there are Ricci-flat K\"{a}hler-Einstein metrics $\omega_{t}$ on $X_{t}$ for $t\in \Delta^{\ast}$ such that
       $(X_{t}, {\rm diam}_{\omega_{t}}^{-2}(X_{t}) \omega_{t})$ converges to a compact metric space $(B,d_{B})$  in the Gromov-Hausdorff sense, when $t\rightarrow 0$.
 Furthermore, the smooth  locus $B_{0}$ of $ B$ is  open dense,  and  is  of real dimension $n$,  and   admits    a real  affine structure. The metric $d_{B}$ is induced by a Monge-Amp\`ere  metric $g_{B}$ on $B_{0}$, i.e.  under  affine coordinates $x_{1},  \cdots, x_{n}$,  there is a potential function $\phi$ such that $$g_{B}= \sum_{ij} \frac{ \partial^{2} \phi}{ \partial x_{i}  \partial x_{j}} dx_{i} dx_{j},    \   \  {\rm and}  \  \   \det \Big(\frac{\partial^{2}\phi}{ \partial x_{i}  \partial x_{j} }\Big ) =1.$$  Clearly  it  is true for Abelian varieties.
This conjecture was verified by Gross and Wilson for fibred  K3 surfaces with only type $I_{1}$ singular fibers  in \cite{GW}, and was studied for higher dimensional HyperK\"ahler manifolds in \cite{GTZ, GTZ2}.  In  \cite{GTZ2},   Gross-Wilson's result was extended  to all elliptically fibred K3 surfaces.

   Inspired by  this  collapsing version of  SYZ conjecture, we study the    limits  of negative K\"{a}hler-Einstein metrics on  canonical polarized   manifolds degenerating to some singular varieties.

 Let $\pi: \mathcal{X}\rightarrow \Delta$ be a canonical polarized degeneration such that $X_{0}$  has only simple normal crossing singularities, i.e. $X_{0}$ is reduced,    locally given by $z_{1}\cdot\cdots \cdot  z_{k}=0$ under   local  coordinates $z_{1}, \cdots, z_{n}$ on  $\mathcal{X}$, and  any  $X_{0,I}$ is smooth.  Let  $\omega_{t}\in 2\pi  c_{1} (\mathcal{K}_{X_{t}})$, $t\in \Delta^{*}$, be the unique K\"{a}hler-Einstein metric on $X_{t}$.
 The convergence of $\omega_{t}$  was studied by various authors (cf. \cite{Tian,leung,ruan1,ruan2,ruan4}).  In   \cite{Tian}, it is proved  that $\omega_{t}$ converges smoothly  to a complete K\"{a}hler-Einstein $\omega_{0}$  with negative Ricci curvature on the regular locus $X_{0,reg}=\bigcup\limits_{i=1}\limits^{l}X_{0,i,reg}$ in the  Cheeger-Gromov sense,   if an additional condition that   any three of the components $X_{0,i}$ have  empty intersection   is satisfied.  More precisely,  for any   smooth family of embeddings $F_{t}:X_{0,reg}  \rightarrow X_{t}$, we have that  $$F_{t}^{*} \omega_{t} \rightarrow \omega_{0},  \   \  \  {\rm when}  \ \  t\rightarrow 0, $$ in the locally  $C^{\infty}$-sense on   $ X_{0,reg}$, where  $\omega_{0}$ is the complete  K\"{a}hler-Einstein metric  on  $ X_{0,reg}$  previously obtained in \cite{TY,Kob,cheng}.    In \cite{ruan1,leung}, the additional assumption is removed, and furthermore, the result is generalized to the case of  toroidal degenerations in \cite{ruan2}. These theorems describe     the  the non-collapsing part   of the limit of $(X_{t}, \omega_{t})$.

 Since the volume of $\omega_{0}$ is finite,  there must be some  collapsing part when $(X_{t}, \omega_{t})$ approaches to the limit,  i.e. there are   points $p_{t}\in X_{t}$ such that the volumes of metric $1$-balls satisfy   $$ {\rm Vol}_{\omega_{t}}(B_{\omega_{t}}(p_{t},1)) \rightarrow 0,  \   \  \  {\rm when}  \ \  t\rightarrow 0.  $$
 Now by  Gromov's precompactness theorem (cf.  \cite{Gromov}),
 a sequence of  $(X_{t}, \omega_{t}, p_{t})$ converges to a pointed complete metric space $(W,d_{W},p_{\infty})$ of Hausdorff dimension less than $2n$ in the pointed Gromov-Hausdorff sense, i.e. for any $R>0$, the  metric $R$-ball   $(B_{\omega_{t}}(p_{t},R), \omega_{t})$ converges to the metric $R$-ball  $ (B_{d_{W}}(p_{\infty},R), d_{W})$  in the Gromov-Hausdorff sense (cf. \cite{Fukaya1}).

 The following theorem is a special case of the main theorem (Theorem \ref{main}) of the present paper,  where a more general hypothesis  is assumed.

 \begin{theorem}\label{thm01}
 Let $\pi: \mathcal{X}\rightarrow \Delta$ be a canonical polarized degeneration such that $X_{0}$  has only simple normal crossing singularities, and $\omega_{t}\in 2\pi  c_{1} (\mathcal{K}_{X_{t}})$ be the unique K\"{a}hler-Einstein metric on $X_{t}$, $t\in \Delta^{*}$.  For any $X_{0,I}$
   and any point $p_{0}\in X_{0,I}\backslash \bigcup\limits_{i\notin I}X_{0,i}$, there are points   $p_{t}\in X_{t}$ such that   $p_{t}\rightarrow p_{0}$ in $\mathcal{X}$ when $t\rightarrow 0$,and  by passing to a sequence,     $(X_{t}, \omega_{t}, p_{t})$ converges to a complete Riemannian manifold $(W, g_{W}, p_{\infty})$ with   $\dim_{\mathbb{R}}W= 2 n+1-\sharp I$ in the pointed Gromov-Hausdorff sense. Furthermore,
  if   $\dim_{\mathbb{C}}X_{0,I}=0$, then $(W, g_{W})$ is isometric to
$(B,g_{B})$ by suitably  choosing $p_{t}$, where $B$ is the interior of the  standard simplex in $\mathbb{R}^{n}$, and there is a smooth  potential function $\phi$ on $B$ such that $ \phi|_{\partial \overline{B}}=+\infty$,  $$g_{B}=\sum_{ij=1}^{n}\frac{\partial^{2} \phi}{\partial x_{i}\partial x_{j}}d  x_{i}d x_{j},
\  \   and  \  \  \det \Big(\frac{\partial^{2} \phi}{\partial x_{i}\partial x_{j}}\Big)=\kappa e^{2\phi}, $$ for a constant $\kappa >0$.\end{theorem}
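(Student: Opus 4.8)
\emph{Overview and the local model.} The plan is to reduce the statement, through the local toroidal picture of $\pi$ near $p_0$, to the collapse of a single explicit torus-invariant model metric, and to tie $\omega_t$ to that model by a priori estimates uniform as $t\to0$. Since $X_0$ is simple normal crossing and $\dim_{\CC}X_{0,I}=0$ we have $\sharp I=n+1$, so after shrinking $\mathcal X$ we may pick holomorphic coordinates $z_1,\dots,z_{n+1}$ near $p_0$ with $\pi=z_1\cdots z_{n+1}$ and $I=\{1,\dots,n+1\}$; then $X_t$ is locally $\{z_1\cdots z_{n+1}=t\}$, biholomorphic to an open subset of $(\CC^{\ast})^n$ via $(z_1,\dots,z_n)$, and it carries the free rotation action of the torus $\mathbb T^n=\{(e^{\mn\theta_1},\dots,e^{\mn\theta_{n+1}}):\textstyle\sum_j\theta_j\in2\pi\ZZ\}$. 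Writing $\xi_j=\log|z_j|^2$, the orbit space of $\mathbb T^n$ is the bounded convex polytope $\Sigma_t=\{\xi_j<0\ (j=1,\dots,n),\ \textstyle\sum_{j=1}^n\xi_j>\log|t|^2\}$, and the affine rescaling $x_j=\xi_j/\log|t|^2$ identifies $\Sigma_t$, for every $t$, with the interior $B$ of the standard simplex $\{x_j>0,\ \sum_j x_j<1\}$. Throughout, $p_t$ is the point with $|z_j(p_t)|=|t|^{1/(n+1)}$ for all $j$; it tends to $p_0$ in $\mathcal X$ and maps to the barycenter of $B$.

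\emph{The model metric and the real Monge-Amp\`ere equation.} On $\{z_1\cdots z_{n+1}=t\}$ I would take the $\mathbb T^n$-invariant complete K\"{a}hler-Einstein metric $\omega_t^{\mathrm{mod}}=\ddbar u_t$, where $u_t=u_t(\xi)$ is the convex function on $\Sigma_t$ solving the real Monge-Amp\`ere equation $\det(\partial^2 u_t/\partial\xi_i\partial\xi_j)=e^{u_t}$ with $u_t\to+\infty$ on $\partial\Sigma_t$; existence, uniqueness and completeness of such $u_t$ on a bounded convex domain is exactly the Cheng-Yau theory for this equation (equivalently, $u_t$ defines the complete hyperbolic affine sphere over $\Sigma_t$). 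For a $\mathbb T^n$-invariant metric one computes $\Ric(\ddbar u_t)=-\ddbar\log\det(\partial^2 u_t/\partial\xi_i\partial\xi_j)$, so this equation says precisely $\Ric(\omega_t^{\mathrm{mod}})=-\omega_t^{\mathrm{mod}}$. Moreover $\omega_t^{\mathrm{mod}}$ has bounded geometry (Poincar\'e-type transverse to each $\{z_j=0\}$), the $\mathbb T^n$-orbit through $p_t$ has $\omega_t^{\mathrm{mod}}$-diameter tending to $0$, and --- tracking the factor $\tfrac12$ that relates the $\mathbb T^n$-quotient of a K\"{a}hler metric to the Hessian of its potential, and the scaling relation for $u_t$ under $\Sigma_t\cong(\log|t|^2)\cdot B$ --- the rescaled $\mathbb T^n$-quotient of $\omega_t^{\mathrm{mod}}$ is, for \emph{every} $t$, the metric $g_B=\sum_{ij}(\partial^2\phi/\partial x_i\partial x_j)\,dx_idx_j$ on $B$, where $\phi$ is the Cheng-Yau solution of $\det(\partial^2\phi/\partial x_i\partial x_j)=\kappa e^{2\phi}$ with $\phi|_{\partial\overline B}=+\infty$ on the bounded convex $B$ (here $\kappa=2^{-n}>0$ with the normalizations above). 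In particular $(B,g_B)$ is a complete Riemannian manifold on the open simplex.

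\emph{A priori estimates and the Gromov-Hausdorff limit.} Following the constructions behind the non-collapsing results \cite{Tian,ruan1,leung,ruan2}, I would glue $\omega_t^{\mathrm{mod}}$ on a neighbourhood $U$ of $p_0$ to a fixed family of background K\"{a}hler metrics in $2\pi c_1(\shK_{X_t})$, obtaining $\omega_t^\sharp\in2\pi c_1(\shK_{X_t})$, write $\omega_t=\omega_t^\sharp+\ddbar\vp_t$, and recast $\Ric(\omega_t)=-\omega_t$ as the complex Monge-Amp\`ere equation $(\omega_t^\sharp+\ddbar\vp_t)^n=e^{\vp_t+F_t}(\omega_t^\sharp)^n$, where $F_t$ is the Ricci potential of $\omega_t^\sharp$ --- pluriharmonic near $p_t$, since there $\omega_t^\sharp=\omega_t^{\mathrm{mod}}$ is K\"{a}hler-Einstein, so a uniform $C^0$ bound on $F_t$ upgrades to a uniform $C^\infty_{loc}$ bound. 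The analytic heart is to establish estimates uniform in $t$ on the balls $B_{\omega_t^\sharp}(p_t,R)$: a $C^0$ bound for $\vp_t$ from the maximum principle at its extrema together with the bound on $F_t$; the Aubin-Yau Laplacian estimate, using the lower bound for the bisectional curvature of $\omega_t^\sharp$ near $p_t$; and Evans-Krylov plus Schauder for the higher derivatives. These force $\omega_t$ and $\omega_t^{\mathrm{mod}}$ to be uniformly equivalent and $C^\infty_{loc}$-close on $B_{\omega_t^\sharp}(p_t,R)$, so $\omega_t$ has uniformly bounded curvature there and $\omega_t$-distances are comparable to $\omega_t^{\mathrm{mod}}$-distances; since the latter from $p_t$ to $\partial U$ diverges as $t\to0$ (leaving $U$ forces traversing a Poincar\'e-type region about some $\{z_j=0\}$), for each fixed $R$ one gets $B_{\omega_t}(p_t,R)\subset U$ for all small $t$, so the pointed $R$-ball is governed entirely by $\omega_t^{\mathrm{mod}}$. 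Hence the pointed Gromov-Hausdorff limit of $(X_t,\omega_t,p_t)$ coincides with that of $(X_t\cap U,\omega_t^{\mathrm{mod}},p_t)$, and the latter is a family of metrics on the $\mathbb T^n$-bundle $X_t\cap U\to\Sigma_t$ whose fibres shrink while the rescaled base is, for every $t$, the fixed complete manifold $(B,g_B)$; therefore the limit is $(B,g_B)$ based at the barycenter, a complete Riemannian manifold of dimension $n=2n+1-\sharp I$, and no subsequence is needed. (The bounded-curvature form of the limit, and the dimension count for a general $X_{0,I}$, can also be read off from Gromov precompactness \cite{Gromov} and the Cheeger-Fukaya-Gromov theory of collapse with bounded curvature, the collapse being along the $\mathbb T^n$-orbits, of real dimension $\sharp I-1$.)

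\emph{Main obstacle.} The hard part will be the package of $t$-uniform a priori estimates for the complex Monge-Amp\`ere potential $\vp_t$ in the collapsing regime --- above all the $C^0$ and second-order estimates, where the standard arguments are sensitive to the vanishing of the volume and to the non-compactness and mildly unbounded geometry of $\mathcal X$ near $X_0$ --- together with the distance estimate that isolates the collapsing region. Making precise the transition between the toroidal model near $p_0$ and the complete, non-collapsed K\"{a}hler-Einstein geometry of $X_{0,reg}$ recalled from \cite{Tian,ruan1,leung}, so that this neck contributes only distance tending to infinity rather than new limit points, is the most delicate point; a secondary technical issue is the construction of the global reference metric $\omega_t^\sharp$ with uniformly controlled Ricci potential.
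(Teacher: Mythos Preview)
Your outline—local toroidal model, uniform Monge--Amp\`ere estimates, collapse to the Cheng--Yau affine K\"ahler metric on the simplex—is broadly right and close to the paper's, but there is one methodological difference and one genuine gap.

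The methodological difference: the paper does \emph{not} build its reference metric by gluing the Cheng--Yau semi-flat model $\omega_t^{\mathrm{mod}}$ into a global background. It uses instead the simpler global metric $\tilde\omega_t=\omega_t^o+\ddbar\log\chi_t$ with $\chi_t=(\log|t|^2)^2\prod_i\alpha_i^{-2}$, $\alpha_i=\tfrac1\mu\log\|s_i\|_i^2$ (Poincar\'e-type along every component of $X_0$), and then cites the uniform $C^0$ and Laplacian estimates for $\varphi_t$ directly from \cite{ruan1,ruan2}. So the package of $t$-uniform a priori estimates you identify as the main obstacle is in fact taken as known input; the new work is elsewhere.

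The genuine gap is your passage from ``uniformly equivalent'' to ``$C^\infty_{loc}$-close''. Uniform $C^0$ and $C^2$ bounds on $\varphi_t$, plus Evans--Krylov and Schauder, give only that subsequences of the pulled-back potentials converge to \emph{some} smooth $\varphi_0$ on $B\times\sqrt{-1}\RR^n$ solving the limiting complex Monge--Amp\`ere equation; they do not force $\varphi_0=0$, so a priori the GH limit of $\omega_t$ is a metric on $B$ merely bi-Lipschitz to $g_B$, not isometric to it. The paper closes this with two steps you omit: (a) since $\varphi_t\circ q_\sigma\circ P_t$ is periodic in $\mathrm{Im}(w_j)$ with period $2\pi/|\log|t||\to 0$, the limit $\varphi_0$ is independent of the angular variables; (b) the limiting equation then reduces to the real Monge--Amp\`ere equation on the bounded simplex, whose solution with $\varphi_0$ bounded is unique by the Cheng--Yau maximum principle (Proposition~5.5 of \cite{CY}), which pins down $\varphi_0$ and identifies the limit metric with $g_B$. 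This uniqueness is also what removes the need to pass to a subsequence in the $\dim_\CC X_{0,I}=0$ case. You invoke Cheng--Yau for the \emph{existence} of $\omega_t^{\mathrm{mod}}$ but never for this identification step, which is where the content lies. Finally, for general $X_{0,I}$ the paper runs the same scheme with the extra factor $X_{0,I}^o$ and uses completeness of $\tilde\omega_{0,I}$ to conclude $W$ is a complete Riemannian manifold; your appeal to Cheeger--Fukaya--Gromov would yield only a limit metric space, not smoothness.
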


Actually $(X_{t}, \omega_{t})$  collapses smoothly in a certain sense, which  is stronger than  the Gromov-Hausdorff topology  (See Theorem \ref{main}  for details).

 This theorem  shows  a similar collapsing behaviour to the SYZ conjecture for Calabi-Yau manifolds, i.e. under certain assumptions, the limit metric space $W$ is an affine K\"{a}hler manifold of  real dimension $n$, and the potential function satisfies a real  Monge-Amp\`{e}re equation.
However,  unlike  the Calabi-Yau case,  we always have  the non-collapsing  part of the limit,  and we do not rescale the metric to obtain the collapsing  limit.
 Note that for algebraic curves of higher  genus, the rescaled limit exists, and is a compact  metric graph by \cite{Ok}.  However, we do not expect that  still holds in the  higher dimensional case.

In the original SYZ conjecture (cf. \cite{SYZ}), the existence of special lagrangian submanifolds is expected when Calabi-Yau manifolds are near    the large complex limit. As an application, we will  construct some generalized special lagrangian submanifolds on canonical polarized manifolds (See Section 2.3 for details).

The understanding of the limit behaviour of negative K\"{a}hler-Einstein metrics is also required for other program. The moduli space $\mathcal{M}$ of canonical  polarized manifolds with a fixed Hilbert polynomial was proven to be a quasi-projective manifold by  Viehweg   in \cite{Vie}, and the recent progress on the moduli space of stable varieties (cf. \cite{Kol}) gives a natural algebro geometric  compactification $\overline{\mathcal{M}}$ of $\mathcal{M}$.   Meanwhile, the existence of  singular K\"{a}hler-Einstein metrics on  stable varieties  was obtained in \cite{BG}.  A natural question is to understand such compactification from the differential geometric viewpoint (cf. \cite{BG,Song}), for example in the Gromov-Hausdorff sense or the Weil-Petersson geometry  sense. However unlike  the case of  Calabi-Yau manifolds  (cf. \cite{Zhang,Tosatti}), we   would not have the coincidence of the  Gromov-Hausdorff non-collapsing convergence and the finite Weil-Petersson distance. In  Theorem \ref{thm01},  $(X_{t}, \omega_{t})$ diverges in the  Gromov-Hausdorff sense, but the   Weil-Petersson metric on  $\Delta^{*}$ is not complete, i.e. $\{0\}$ has finite Weil-Petersson distance to the interior by \cite{Tian,ruan1,ruan2}.

 This paper is organized  as the followings.  In Section 2,  we introduce the preliminary materiel  and state the main theorems  (Theorem \ref{main} and Theorem \ref{main2}) of this paper. In Section 1.1, we   construct some semi-flat  K\"{a}hler-Einstein metrics from those affine K\"{a}hler metrics obtained by Cheng and Yau previously. In Section 1.2 and Section 1.3, the main theorems (Theorem  \ref{main} and Theorem \ref{main2}) are  given.  Theorem  \ref{main} study  the metric collapsing along toroidal degenerations, and Theorem \ref{main2} shows  the existence of  generalized special lagrangian submanifolds.   Section 3 is devoted to prove Theorem \ref{main}. Firstly,   we construct the approximation background metrics in Section 3.1,  then we do some local calculations and  prove Theorem  \ref{main} in Section 3.2.  The last section proves Theorem \ref{main2}.

\noindent {\bf Acknowledgements:} The author would  like to thank Prof.  	
Shiu-Yuen Cheng for answering a question.

\section{Main Theorems}
In this paper, we always denote  $N\cong \mathbb{Z}^{n+1}$,    $N_{\mathbb{R}}=N\otimes_{\mathbb{Z}}\mathbb{R}$, $T_{N}= N\otimes_{\mathbb{Z}}\mathbb{C}^{*}$,  $M=\hom_{\mathbb{Z}}(N,\mathbb{Z})$ and $M_{\mathbb{R}}=M\otimes_{\mathbb{Z}}\mathbb{R}$.

\subsection{Semi-flat K\"{a}hler-Einstein  metric}
 In this section, we recall  a theorem due to Cheng and Yau for the existence of  affine K\"{a}hler metrics, which induce some  semi-flat K\"{a}hler-Einstein metrics that appear in the main theorem.

Let $\sigma$ be a rational strongly convex polyhedral cone in $ N_{\mathbb{R}}$, and $\check{\sigma}\subset M_{\mathbb{R}}$ be the dual cone. If $u_{\sigma}\in M\cap \check{\sigma}$  satisfies  $\langle u_{\sigma}, v\rangle=1$ for the  primitive lattice   vector $v\in \tau\cap N$ of   any  1-dimensional face $\tau$ of $\sigma$, then we define $$\Lambda_{\mathbb{R}} =\{ v \in N_{\mathbb{R}}| \langle v, u_{\sigma} \rangle=1\},   \   \ B_{\sigma}= \Lambda_{\mathbb{R}} \cap {\rm Int} (\sigma), \  \ {\rm and} \ \ \Lambda=N\cap \Lambda_{\mathbb{R}}$$ where ${\rm Int} (\sigma)$ denotes the interior of $\sigma$. The closure $\overline{B}_{\sigma}$ of $B_{\sigma}$ is a  rational  convex  polytope in $\Lambda_{\mathbb{R}}$.

Let  $\mathcal{Y}_{\sigma}$ be  the affine  toric variety associated to $\sigma$, i.e. $\mathcal{Y}_{\sigma}={\rm Spec}(\mathbb{C}[\check{\sigma}\cap M])$, and $t=\mathcal{Z}^{u_{\sigma}}: \mathcal{Y}_{\sigma} \rightarrow \mathbb{C}$.  We have a family of varieties $Y_{\sigma,t}={\rm div}(\mathcal{Z}^{u_{\sigma}}-t)$ degenerating  to the toric boundary  $Y_{0}$, i.e.   $Y_{0}=\bigcup\limits_{i=1}^{d}D_{i}$ where $D_{i}$ is a primitive toric Weil divisor.

If  $e_{0}, \cdots, e_{n}\in N$ is a   basis,  we denote  $x_{0}, \cdots , x_{n}$  the respecting coordinates on $N_{\mathbb{R}}$, and denote  $z_{j}=\mathcal{Z}^{e_{j}^{*}}$, $j=0, \cdots, n$.  If $u_{\sigma}=\sum\limits_{j=0}^{n}m_{j}e_{j}^{*}$, then $Y_{\sigma,t}$ is given by $z_{0}^{m_{0}}\cdot\cdots\cdot z_{n}^{m_{n}}=t$,  and $\Lambda_{\mathbb{R}}$ is given by $m_{0}x_{0}+\cdots + m_{n}x_{n}=1$.
   Without loss of generality, we assume that $x_{1}, \cdots , x_{n}$ are  coordinates on $\Lambda_{\mathbb{R}}$, i.e. $m_{0}\neq 0 $,  which give  an integral affine structure on $B_{\sigma}$.

    For any $t\in \Delta^{*}$,  the logarithmic map is  $$ {\rm Log}_{t}:  T_{N} \rightarrow N_{\mathbb{R}} ,  \  \  {\rm by  } \ \ z_{j} \mapsto  x_{j}=\frac{\log |z_{j}|}{\log |t|}, \  j=0, \cdots, n.$$
   It is clear that
    $ {\rm Log}_{t} (  Y_{\sigma,t} )= \Lambda_{\mathbb{R}}$.
      We denote $$\mathcal{U}=\{p\in \mathcal{Y}_{\sigma}| |\mathcal{Z}^{u_{k}}(p)|<1, k=1, \cdots, d' \},$$ which is  an open subset of $\mathcal{Y}_{\sigma}$,  where $u_{k}\in M \cap\check{\sigma}$ such that $\sigma=\{v\in N_{\mathbb{R}} | \langle v, u_{k}\rangle \geq0, k=1, \cdots, d'\}$. We have  ${\rm Log}_{t} (  \mathcal{U} )= {\rm Int} (\sigma) $, and moreover, $ {\rm Log}_{t} (  Y_{\sigma,t}\cap \mathcal{U} )= B_{\sigma}$.

     We define coordinates  $\theta_{1}, \cdots , \theta_{n}$  on $\Lambda_{\mathbb{R}}$  by $\theta_{j}=dx_{j}$, $j=1, \cdots, n$, under the identification of  the tangent bundle $T   B_{\sigma} \cong B_{\sigma} \times \Lambda_{\mathbb{R}}$.  Then there is a natural complex structure on $B_{\sigma} \times \sqrt{-1} \Lambda_{\mathbb{R}}$ given by complex coordinates $w_{j}=x_{j}+\sqrt{-1}\theta_{j}$, $j=1, \cdots, n$, which  induces a complex structure on $Y_{t,m_{0}}(B_{\sigma})= B_{\sigma} \times \sqrt{-1} (\Lambda_{\mathbb{R}}/ \frac{2\pi m_{0} \Lambda}{\log |t|})$ for any $t\in \Delta^{*}$. We define a finite covering map $q_{\sigma}: Y_{t,m_{0}}(B_{\sigma}) \rightarrow Y_{\sigma,t}\cap \mathcal{U} $  by setting   $z_{j}=\exp ((\log |t|)w_{j})$, $j=1, \cdots,n$, and $$z_{0}=\exp (\frac{1}{m_{0}}\log |t|+\sqrt{-1}\frac{\arg (t)}{m_{0}}-\sum_{j=1}^{n}\frac{m_{j}}{m_{0}}(\log|t|)w_{j}).  $$ Furthermore,  $f_{t}={\rm Log}_{t}|_{Y_{\sigma,t}\cap \mathcal{U}}: Y_{\sigma,t}\cap \mathcal{U} \rightarrow B_{\sigma}$ is a fibration such   that $f_{t}\circ q_{\sigma}$ is  the projection from $Y_{t,m_{0}}(B_{\sigma})$ to $B_{\sigma}$.

 Now we recall a  theorem for    the existence of affine  K\"{a}hler  metrics   in \cite{CY1}.

\begin{theorem}[Theorem 4.4 in \cite{CY1}]\label{CY}  For any constant $\kappa >0$, there is a smooth convex  solution $\phi$ of the real  Monge-Amp\`{e}re equation  \begin{equation}\label{e0.1+}\det \Big(\frac{\partial^{2} \phi}{\partial x_{i}\partial x_{j}}\Big)=\kappa e^{2\phi},  \  \  \ \phi|_{\partial \overline{B}_{\sigma}}=+\infty,\end{equation} and $$g_{B_{\sigma}}=\sum_{ij=1}^{n}\frac{\partial^{2} \phi}{\partial x_{i}\partial x_{j}}d  x_{i}d x_{j}$$ is a  complete affine   K\"{a}hler metric on $B_{\sigma}$.
\end{theorem}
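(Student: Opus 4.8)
The plan is to treat the real Monge--Amp\`ere equation \eqref{e0.1+} as one whose right-hand side $\kappa e^{2\phi}$ is an \emph{increasing} function of the unknown, and to solve it with infinite Dirichlet data on the bounded convex polytope $\overline{B}_{\sigma}$ by sandwiching it between explicit sub- and super-solutions, then producing $\phi$ as a limit of solutions of ordinary (finite) Dirichlet problems on an exhaustion of $B_{\sigma}$, and finally reading off completeness of $g_{B_{\sigma}}$ from a gradient estimate. Write $\overline{B}_{\sigma}=\{x:\ell_{a}(x)\geq 0,\ a=1,\dots,N\}$ with the $\ell_{a}$ affine; since $\overline{B}_{\sigma}$ is a bounded polytope the gradients $\nabla\ell_{a}$ span $\mathbb{R}^{n}$, so for any $c_{a}>0$ the function $\psi_{c}:=-\sum_{a}c_{a}\log\ell_{a}$ is smooth and strictly convex on $B_{\sigma}$, tends to $+\infty$ at $\partial\overline{B}_{\sigma}$, and, by Cauchy--Binet, $\det D^{2}\psi_{c}=\sum_{|S|=n}\big(\prod_{a\in S}c_{a}\ell_{a}^{-2}\big)\big(\det[\nabla\ell_{a}]_{a\in S}\big)^{2}$. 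Along an approach to the relative interior of a codimension-$k$ face, this determinant blows up at the ``geometric'' rate, i.e.\ like $\prod_{a\in T}\ell_{a}^{-2}$ for a maximal linearly independent family $T$ (of size $k$) among the conormals of the facets vanishing on that face, whereas $e^{2\psi_{c}}=\prod_{a}\ell_{a}^{-2c_{a}}$ blows up at the rate governed by $\sum c_{a}$ over \emph{all} of those facets. Comparing these rates face by face, one finds that for a sufficiently small $\lambda>0$ (small relative to the combinatorics of $\overline{B}_{\sigma}$, which matters only at its non-simple faces) and a sufficiently large $C>0$, $\phi^{-}:=-\lambda\sum_{a}\log\ell_{a}-C$ is a subsolution, $\det D^{2}\phi^{-}\geq\kappa e^{2\phi^{-}}$, and $\phi^{+}:=-\sum_{a}\log\ell_{a}+C$ is a supersolution, $\det D^{2}\phi^{+}\leq\kappa e^{2\phi^{+}}$; both are convex and both tend to $+\infty$ at $\partial\overline{B}_{\sigma}$.

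Exhaust $B_{\sigma}$ by the sublevel sets $\Omega_{j}=\{\,-\sum_{a}\log\ell_{a}<M_{j}\,\}$, $M_{j}\nearrow+\infty$; because $-\sum_{a}\log\ell_{a}$ is smooth and strictly convex, each $\Omega_{j}$ is a smooth strictly convex domain, on whose boundary $\phi^{+}\equiv M_{j}+C$ and $\phi^{-}\equiv\lambda M_{j}-C$. By the classical theory of the Dirichlet problem for the Monge--Amp\`ere equation (Caffarelli--Nirenberg--Spruck, or the regularity results of Cheng--Yau), and because the right-hand side is increasing in the unknown, there is a unique smooth convex solution $\phi_{j}$ of $\det D^{2}\phi_{j}=\kappa e^{2\phi_{j}}$ on $\Omega_{j}$ with $\phi_{j}|_{\partial\Omega_{j}}\equiv M_{j}$; one runs the method of continuity, the a priori $C^{0}$--$C^{2,\alpha}$ bounds coming from the maximum principle, the interior Pogorelov--Calabi second-derivative estimate, the Caffarelli--Nirenberg--Spruck boundary estimates, and Evans--Krylov together with Schauder theory. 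Since $\lambda M_{j}-C\leq M_{j}\leq M_{j}+C$ on $\partial\Omega_{j}$, the comparison principle gives $\phi^{-}\leq\phi_{j}\leq\phi^{+}$ on $\Omega_{j}$, so the $\phi_{j}$ are locally uniformly bounded on $B_{\sigma}$; as $\kappa e^{2\phi_{j}}$ is then pinched between positive constants on each compact subset, Caffarelli's interior regularity yields uniform local $C^{k}$ bounds for every $k$, and a subsequence converges in $C^{\infty}_{\mathrm{loc}}(B_{\sigma})$ to a smooth convex solution $\phi$ of \eqref{e0.1+} with $\phi^{-}\leq\phi\leq\phi^{+}$; in particular $\phi|_{\partial\overline{B}_{\sigma}}=+\infty$. (Alternatively one may define $\phi$ as the upper envelope of all Alexandrov subsolutions lying below $\phi^{+}$ and feed it directly into Caffarelli's regularity theory.)

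It remains to see that $g_{B_{\sigma}}=D^{2}\phi$ is complete. Differentiating $\log\det D^{2}\phi=\log\kappa+2\phi$ gives $\phi^{ij}\phi_{ijk}=2\phi_{k}$, where $(\phi^{ij})$ denotes the inverse of $(\phi_{ij})$; that is, $\mathcal{L}\phi_{k}=2\phi_{k}$ for the linearised operator $\mathcal{L}:=\phi^{ij}\partial_{i}\partial_{j}$. A standard Bochner-type computation then yields $\mathcal{L}Q\geq\lambda_{0}Q-\Lambda_{0}$ with constants $\lambda_{0},\Lambda_{0}>0$ for $Q:=\phi^{ij}\phi_{i}\phi_{j}=|d\phi|^{2}_{g_{B_{\sigma}}}$, and, $Q$ being controlled near $\partial\overline{B}_{\sigma}$ through the equation and the logarithmic bounds on $\phi$, the maximum principle forces $Q\leq C_{1}$ on $B_{\sigma}$. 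Then $g_{B_{\sigma}}$ is complete: were some unit-speed geodesic $\gamma\colon[0,L)\to B_{\sigma}$ of finite length to leave every compact subset, then $\gamma(t)\to\partial\overline{B}_{\sigma}$ and $\phi(\gamma(t))\to+\infty$, whereas $\phi(\gamma(L^{-}))-\phi(\gamma(0))=\int_{0}^{L}\langle\nabla\phi,\dot\gamma\rangle\,dt\leq\int_{0}^{L}\sqrt{Q}\,|\dot\gamma|_{g_{B_{\sigma}}}\,dt\leq\sqrt{C_{1}}\,L<\infty$, a contradiction.

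The technical heart here is the a priori estimate machinery for the Monge--Amp\`ere equation --- above all the interior second-derivative (Pogorelov--Calabi) estimate --- which is classical but deep and is what makes the Dirichlet problems on the $\Omega_{j}$ solvable; what is genuinely specific to the present statement is the tuning of the barrier weights to the polyhedral combinatorics of $\overline{B}_{\sigma}$ at its non-simple faces, and the gradient estimate $Q\leq C_{1}$ that yields completeness of $g_{B_{\sigma}}$.
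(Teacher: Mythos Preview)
The paper does not prove this theorem at all: it is quoted verbatim as Theorem~4.4 of Cheng--Yau \cite{CY1}, with the single remark that the constant $\kappa$ can be made arbitrary by rescaling the affine coordinates (and later, after \eqref{e0.1}, that uniqueness of $\phi$ follows from Proposition~5.5 of \cite{CY}). So there is no ``paper's own proof'' to compare against; the result is used as a black box.

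Your sketch is, in broad outline, the Cheng--Yau method: logarithmic barriers adapted to the convex domain, solution of finite Dirichlet problems on an exhaustion, interior a~priori estimates to pass to the limit, and a gradient bound $|d\phi|^{2}_{g_{B_{\sigma}}}\leq C_{1}$ to force completeness. That is the right architecture. Two places deserve more care if you want this to stand on its own. First, the barrier comparison at non-simple faces: your heuristic ``$\det D^{2}\psi_{c}$ blows up at the geometric rate while $e^{2\psi_{c}}$ blows up at a rate governed by $\sum c_{a}$'' is correct in spirit, but the actual inequality $\det D^{2}\phi^{-}\geq\kappa e^{2\phi^{-}}$ near a face with $|A_{F}|>k$ active facets requires $\lambda\leq k/|A_{F}|$ (roughly), and making this uniform over all faces and all directions of approach needs a genuine argument, not just the phrase ``small relative to the combinatorics''. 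Second, the completeness step: the claimed differential inequality $\mathcal{L}Q\geq\lambda_{0}Q-\Lambda_{0}$ does not quite hold as stated --- the Cheng--Yau computation for $Q=\phi^{ij}\phi_{i}\phi_{j}$ under $\log\det D^{2}\phi=2\phi+\mathrm{const}$ produces third-derivative terms that must be absorbed, and one typically applies the maximum principle to an auxiliary function like $(M-\phi)^{2}Q$ on sublevel sets rather than to $Q$ directly; your phrase ``$Q$ being controlled near $\partial\overline{B}_{\sigma}$ through the equation and the logarithmic bounds on $\phi$'' papers over exactly this step. Neither issue is fatal to the strategy, but both are where the real work in \cite{CY1} lies.
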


  Note that the constant $\kappa$ is chosen to be $1$ in   \cite{CY1}, and however, we can obtain  the general case by rescaling the coordinates.
By pulling back $\phi$, we  regard $\phi$ as a function on $B_{\sigma} \times \sqrt{-1} \Lambda_{\mathbb{R}}$, i.e.  $\phi(w_{1}, \cdots, w_{n})=\phi(x_{1}, \cdots, x_{n})$, which defines  a complete K\"{a}hler metric \begin{equation}\label{e0.1} \omega^{sf}=2\sqrt{-1} \partial \overline{\partial} \phi= \frac{\sqrt{-1}}{2}\sum_{ij=1}^{n}\frac{\partial^{2} \phi}{\partial x_{i}\partial x_{j}}d  w_{i}\wedge d \bar{w}_{j}
  \end{equation} on $B_{\sigma} \times \sqrt{-1} \Lambda_{\mathbb{R}}$.    By (\ref{e0.1+}), $\phi$ satisfies the complex Monge-Amp\`{e}re equation $\det \big(\frac{\partial^{2} \phi}{\partial w_{i}\partial \bar{w}_{j}}\big)=4^{n}\kappa e^{2\phi} $ on $B_{\sigma} \times \sqrt{-1} \Lambda_{\mathbb{R}}$, and hence $\omega^{sf}$ is a K\"{a}hler-Einstein  metric with Ricci curvature $-1$, i.e. $$ {\rm Ric}(\omega^{sf})=-\sqrt{-1} \partial \overline{\partial}\log \det \big(\frac{\partial^{2} \phi}{\partial w_{i}\partial \bar{w}_{j}}\big)=- \omega^{sf}. $$    Now Proposition 5.5 in \cite{CY} implies that $\phi$ is the unique solution of (\ref{e0.1+}) (See also  \cite{Hild}).

   Since both $\phi$ and $\omega^{sf}$ are   invariant under the translation $w_{j}\mapsto w_{j}+\sqrt{-1}  \lambda$ for any $\lambda\in \mathbb{R}^{1}$, $\omega^{sf}$ descents to    a complete K\"{a}hler-Einstein  metric  on $Y_{t,m_{0}}(B_{\sigma})$ first, for any $t\in \Delta^{*}$, and further to a  complete K\"{a}hler-Einstein  metric  on   $ Y_{\sigma,t}\cap \mathcal{U} $ denoted  by   $\omega^{sf}_{t}$. Note that the corresponding Riemannian metric of  $\omega^{sf}$ is $$g^{sf}= \sum_{ij=1}^{n}\frac{\partial^{2} \phi}{\partial x_{i}\partial x_{j}}(d  x_{i}d x_{j}+d \theta_{i}d \theta_{j}).  $$
   The first consequence  is that the restriction of  $\omega^{sf}_{t}$ on any fiber $f^{-1}_{t}(x) $, $ x\in B_{\sigma}$,  is  flat, so called a semi-flat K\"{a}hler-Einstein  metric.  The second one is that the diameter of the fiber $${\rm diam}_{\omega^{sf}_{t}}(f^{-1}_{t}(x))\sim - ( \log|t|)^{-1} \rightarrow 0, $$ and by suitably  choosing a family of base points $p_{t}\in Y_{\sigma,t}$, $(Y_{\sigma,t}\cap \mathcal{U}, \omega^{sf}_{t},p_{t}) $ converges to $(B_{\sigma}, g_{B_{\sigma}}, p_{\infty})$ in the pointed Gromov-Hausdorff sense, when $t\rightarrow 0$. We say that $(Y_{\sigma,t}\cap \mathcal{U}, \omega^{sf}_{t}) $ collapses  to $(B_{\sigma}, g_{B_{\sigma}})$.

   In summary, we have the following proposition.

\begin{proposition}\label{pro-0}   For any $t\in\Delta^{*}$, there is a unique complete   K\"{a}hler-Einstein  metric $\omega^{sf}_{t}$ on $Y_{\sigma,t}\cap \mathcal{U}$ such that the Ricci curvature is $-1$, i.e.
   $$ {\rm Ric}(\omega^{sf}_{t})=-\omega^{sf}_{t},$$ and $\omega^{sf}_{t}$ is semi-flat respecting to the torus fibration $f_{t}: Y_{\sigma,t}\cap \mathcal{U} \rightarrow B_{\sigma}$.   Furthermore,   $(Y_{\sigma,t}\cap \mathcal{U}, \omega^{sf}_{t},p_{t}) $ converges to $(B_{\sigma}, g_{B_{\sigma}},p_{\infty})$ in the pointed Gromov-Hausdorff sense  by choosing a family of base points $p_{t}\in Y_{\sigma,t}$,  when $t\rightarrow 0$.
\end{proposition}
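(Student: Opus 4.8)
The plan is to assemble the construction that has essentially been carried out in the discussion above and to supplement it with the uniqueness argument. First I would fix the model on the tube domain $\mathcal{V}:=\{w\in\mathbb{C}^{n}:\Re w\in B_{\sigma}\}$, where $B_{\sigma}$ is viewed as a bounded convex open set via the affine coordinates $x_{1},\dots,x_{n}$. By Theorem~\ref{CY} there is a convex solution $\phi$ of (\ref{e0.1+}) on $B_{\sigma}$ with associated complete affine K\"ahler metric $g_{B_{\sigma}}$; regarding $\phi$ as a function of $x=\Re w$, the form $\omega^{sf}=2\ddbar\phi$ of (\ref{e0.1}) is, as computed above, a K\"ahler metric with $\Ric(\omega^{sf})=-\omega^{sf}$, and its Riemannian metric $g^{sf}=\sum_{ij}\frac{\partial^{2}\phi}{\partial x_{i}\partial x_{j}}(dx_{i}dx_{j}+d\theta_{i}d\theta_{j})$ is complete: a divergent curve in $\mathcal{V}$ either leaves every compact subset of $B_{\sigma}$ in its $x$-component, hence has infinite $g_{B_{\sigma}}$-length and a fortiori infinite $g^{sf}$-length, or it stays over a compact $K\subset B_{\sigma}$ while running off in the $\theta$-directions, where $(\frac{\partial^{2}\phi}{\partial x_{i}\partial x_{j}})$ is uniformly positive, so its length is again infinite. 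Since $\phi$ depends only on $\Re w$, both $\phi$ and $\omega^{sf}$ are invariant under all imaginary translations, so $\omega^{sf}$ descends through the lattice quotient to $Y_{t,m_{0}}(B_{\sigma})$ and, being invariant under the finite deck group of $q_{\sigma}$, further through $q_{\sigma}$ to a form $\omega^{sf}_{t}$ on $Y_{\sigma,t}\cap\mathcal{U}$. Both quotient maps are local isometries, so $\omega^{sf}_{t}$ is K\"ahler with $\Ric(\omega^{sf}_{t})=-\omega^{sf}_{t}$ and complete (being the quotient of the complete $\mathcal{V}$ by a properly discontinuous free isometric action of a group of imaginary translations containing a full-rank lattice with finite index). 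Finally, a fibre $f_{t}^{-1}(x)$ is the image under a local isometry of the flat torus $\{x\}\times\sqrt{-1}(\Lambda_{\mathbb{R}}/(\text{a lattice}))$ with the constant-coefficient metric $\sum_{ij}\frac{\partial^{2}\phi}{\partial x_{i}\partial x_{j}}(x)\,d\theta_{i}d\theta_{j}$, hence is flat, so $\omega^{sf}_{t}$ is semi-flat for $f_{t}$.

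For the uniqueness clause I would argue as follows. Let $\omega'$ be any complete K\"ahler--Einstein metric on $Y_{\sigma,t}\cap\mathcal{U}$ with $\Ric(\omega')=-\omega'$. Since $\mathcal{V}\cong B_{\sigma}\times\mathbb{R}^{n}$ is contractible, it is the universal cover of $Y_{\sigma,t}\cap\mathcal{U}$, and the pullback $g'$ of $\omega'$ to $\mathcal{V}$ is again a complete K\"ahler--Einstein metric of Ricci curvature $-1$. Now $\mathcal{V}$ is a Stein tube domain over a bounded convex set, and on such a manifold the complete K\"ahler--Einstein metric of Ricci curvature $-1$ is unique: writing $g'=2\ddbar\psi'$ and $\omega^{sf}=2\ddbar\phi$, the potentials satisfy a complex Monge--Amp\`ere equation of the form $\det(\partial^{2}\psi/\partial w_{i}\partial\bar w_{j})=c\,e^{2\psi}$, and $\psi'-\phi$ is forced to be constant by the generalized maximum principle on a complete manifold of Ricci curvature bounded below, using the monotonicity of $\log\det(\cdot)$ in the potential --- this is the complex counterpart of the argument behind Proposition 5.5 of \cite{CY}, invoked above for $\phi$. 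Hence $g'=\omega^{sf}$ on $\mathcal{V}$; as $g'$ and $\omega^{sf}$ are the pullbacks of $\omega'$ and $\omega^{sf}_{t}$ under the same covering map, $\omega'=\omega^{sf}_{t}$.

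It remains to prove the collapsing. The projection $(\mathcal{V},g^{sf})\to(B_{\sigma},g_{B_{\sigma}})$, $w\mapsto\Re w$, is a Riemannian submersion --- $g^{sf}$ has no $dx\,d\theta$ cross terms, so the horizontal space is $\mathrm{span}(\partial_{x_{i}})$ and $g^{sf}$ restricts on it to $g_{B_{\sigma}}$ --- and this property descends, so $f_{t}\colon(Y_{\sigma,t}\cap\mathcal{U},g^{sf}_{t})\to(B_{\sigma},g_{B_{\sigma}})$ is a Riemannian submersion with compact torus fibres; in particular $f_{t}$ is $1$-Lipschitz, and the fibre over $x$ has diameter $\leq C(x)/(-\log|t|)$ with $C$ locally bounded on $B_{\sigma}$. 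Fix $\hat x_{0}\in B_{\sigma}$, set $p_{\infty}=\hat x_{0}$, and take any $p_{t}$ with $f_{t}(p_{t})=\hat x_{0}$. For $R>0$, $K:=\overline{B_{g_{B_{\sigma}}}(\hat x_{0},2R)}$ is compact in $B_{\sigma}$ by completeness of $g_{B_{\sigma}}$ and Hopf--Rinow, and lifting minimizing $g_{B_{\sigma}}$-geodesics horizontally (possible since the fibres are compact) shows that $f_{t}$ carries $B_{g^{sf}_{t}}(p_{t},R)$ onto $B_{g_{B_{\sigma}}}(\hat x_{0},R)$ and that
\[
0\;\leq\; d_{g^{sf}_{t}}(p,q)-d_{g_{B_{\sigma}}}(f_{t}(p),f_{t}(q))\;\leq\;\frac{\sup_{K}C}{-\log|t|}\;\longrightarrow\;0
\]
for all $p,q\in B_{g^{sf}_{t}}(p_{t},R)$; thus $f_{t}$ restricts on every $R$-ball to a Gromov--Hausdorff approximation whose error tends to $0$, which gives the asserted pointed Gromov--Hausdorff convergence. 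The only genuinely non-formal step is the uniqueness clause, which requires the uniqueness theory for complete negative K\"ahler--Einstein metrics rather than bookkeeping of the explicit model; the collapsing is routine submersion geometry, its one subtlety being that fibre diameters are controlled only locally on $B_{\sigma}$, which is precisely why the base points must be chosen over a fixed interior point and the completeness of $g_{B_{\sigma}}$ is used to confine each $R$-ball to a compact part of the base.
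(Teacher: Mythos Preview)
Your proposal is correct and follows essentially the same route as the paper, which treats this proposition as a summary of the preceding discussion rather than giving a separate proof. The paper constructs $\omega^{sf}$ on the tube $B_{\sigma}\times\sqrt{-1}\Lambda_{\mathbb{R}}$ from the Cheng--Yau potential $\phi$, observes $\Ric(\omega^{sf})=-\omega^{sf}$, descends it through the lattice quotient and then through $q_{\sigma}$, notes the fibre metrics are flat, and asserts fibre diameters $\sim -(\log|t|)^{-1}\to 0$ to conclude pointed Gromov--Hausdorff convergence; for uniqueness it simply cites Proposition~5.5 of \cite{CY}. Your write-up supplies the details the paper omits: an explicit completeness argument for $g^{sf}$, the observation that $f_{t}$ is a Riemannian submersion (which makes the collapsing argument clean), and an unpacked uniqueness argument via the universal cover. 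The one small difference is that the paper phrases uniqueness as uniqueness of the real Monge--Amp\`ere solution $\phi$, whereas you argue uniqueness among all complete K\"ahler--Einstein metrics of Ricci $-1$ on the tube; both rest on the same Cheng--Yau maximum-principle input, and your version actually matches the proposition's wording more literally.
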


The logarithm ${\rm Log}_{t}$ is used to convert  classical algebraic varieties to tropical varieties (cf. \cite{Mikh}), and it is believed that the collapsing of K\"{a}hler-Einstein  metrics can do the same in  certain circumstances  (cf. \cite{gross-survey,Fukaya2}).  This is true in our case as   a direct corollary of the previous  arguments.

  Let $ \mathfrak{p}\in \mathbb{C}[\check{\sigma}\cap M](t)$,   i.e. $ \mathfrak{p}=\sum\limits_{u\in A} b_{u}t^{\upsilon (u)}\mathcal{Z}^{u}$ for a finite set $A\subset \check{\sigma}\cap M$, $b_{u} \in \mathbb{C}^{*}$,  and $\upsilon: A\rightarrow \mathbb{Z}$, and  $ \mathrm{V}_{t,\mathfrak{p}} \subset Y_{\sigma,t}$ be  the variety defined  by $\mathfrak{p}|_{Y_{\sigma,t}}=0$.    The image $\mathcal{A}_{t}={\rm Log}_{t} ( \mathrm{V}_{t,\mathfrak{p}}) \subset \Lambda_{\mathbb{R}}$ is called  an amoeba, and it is proven  in  \cite{Mikh} that $\mathcal{A}_{t}$ converges to a polyhedron complex $\mathcal{A}_{\infty}$ in the Hausdorff topology,  when $t\in \mathbb{R}$ and $t\rightarrow 0$.  Here  $\mathcal{A}_{\infty}$ is called a non-Archimedean amoeba, and is the set of  non-smooth points of the function
$$ \mathfrak{p}_{\infty}(x) = \min\limits_{u\in A} \{\upsilon (u)+\langle x, u \rangle\}$$ on $\Lambda_{\mathbb{R}}$.    In tropical geometry, $\mathcal{A}_{\infty}$ is the  tropical hypersurface defined by $\mathfrak{p}$ (cf. \cite{Mikh}).  We have the following  corollary by the collapsing of  $\omega^{sf}_{t}  $  to $ g_{B_{\sigma}}$.

\begin{corollary}\label{coro}    When $t\in \mathbb{R}$ and $t\rightarrow 0$,  $$\mathrm{V}_{t,\mathfrak{p}}\cap \mathcal{U} \rightarrow \mathcal{A}_{\infty}\cap B_{\sigma}$$ under  the pointed Gromov-Hausdorff convergence of $(Y_{\sigma,t}\cap \mathcal{U}, \omega^{sf}_{t}) $  to $(B_{\sigma}, g_{B_{\sigma}})$.
\end{corollary}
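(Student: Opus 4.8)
The plan is to deduce the corollary directly from Proposition \ref{pro-0} together with Mikhalkin's convergence theorem for amoebae, exploiting that the pointed Gromov-Hausdorff convergence $(Y_{\sigma,t}\cap\mathcal U,\omega^{sf}_t,p_t)\to(B_\sigma,g_{B_\sigma},p_\infty)$ is realized by the torus fibration $f_t={\rm Log}_t|_{Y_{\sigma,t}\cap\mathcal U}$ whose fibers collapse. Since ${\rm diam}_{\omega^{sf}_t}(f_t^{-1}(x))\sim-(\log|t|)^{-1}\to 0$ uniformly over $\omega^{sf}_t$-bounded regions, $f_t$ is a $\ve_t$-Gromov-Hausdorff approximation with $\ve_t\to 0$, i.e. $|d_{\omega^{sf}_t}(p,q)-d_{g_{B_\sigma}}(f_t(p),f_t(q))|\le\ve_t$ for $p,q$ in any fixed bounded set; this is the precise sense in which ``$\mathrm V_{t,\mathfrak p}\cap\mathcal U\to\mathcal A_\infty\cap B_\sigma$ under the convergence'' is to be read, namely Hausdorff convergence of the (based, closed) subsets inside each fixed ball under the maps $f_t$. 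The first step is the elementary identity ${\rm Log}_t(\mathrm V_{t,\mathfrak p}\cap\mathcal U)=\mathcal A_t\cap B_\sigma$: since $u_\sigma\in{\rm Int}(\check\sigma)$, the monomial $\mathcal Z^{u_\sigma}$ vanishes on the toric boundary, so for $t\neq 0$ the fiber $Y_{\sigma,t}$ lies in the open torus $T_N$; on $T_N$ one has $p\in\mathcal U\iff|\mathcal Z^{u_k}(p)|<1\ \text{for all }k\iff\langle{\rm Log}_t(p),u_k\rangle>0\ \text{for all }k\iff{\rm Log}_t(p)\in{\rm Int}(\sigma)$, using $\log|t|<0$; hence $\mathcal U={\rm Log}_t^{-1}({\rm Int}\,\sigma)$ on $T_N$ and ${\rm Log}_t(\mathrm V_{t,\mathfrak p}\cap\mathcal U)=\mathcal A_t\cap{\rm Int}(\sigma)=\mathcal A_t\cap B_\sigma$ because $\mathcal A_t\subset\Lambda_{\mathbb R}$.

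Next I would invoke the cited result of \cite{Mikh}: when $t\in\mathbb R$ and $t\to 0$, the amoebae $\mathcal A_t$ converge to the non-Archimedean amoeba $\mathcal A_\infty$ in the Hausdorff topology of $\Lambda_{\mathbb R}$ equipped with its Euclidean metric. To transport this to the Riemannian metric $g_{B_\sigma}$, I use that $g_{B_\sigma}$ is smooth and \emph{complete} on $B_\sigma$, so each metric ball closure $\overline{B_{g_{B_\sigma}}(p_\infty,R)}$ is a compact subset of the open set $B_\sigma$; on such a compact set the Euclidean distance and $d_{g_{B_\sigma}}$ are bi-Lipschitz equivalent, whence $\mathcal A_t\cap B_\sigma\to\mathcal A_\infty\cap B_\sigma$ in the $d_{g_{B_\sigma}}$-Hausdorff distance inside every fixed ball $B_{g_{B_\sigma}}(p_\infty,R)$.

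Finally I would lift this convergence to the total spaces. Fix $R>0$ and choose the base points $p_t$ as in Proposition \ref{pro-0}, say over a fixed interior point of $B_\sigma$. The subset $\mathrm V_{t,\mathfrak p}\cap\mathcal U$ surjects under $f_t$ onto $\mathcal A_t\cap B_\sigma$, and since each fiber $f_t^{-1}(x)$ has $\omega^{sf}_t$-diameter $\to 0$, $\mathrm V_{t,\mathfrak p}\cap\mathcal U$ is $\omega^{sf}_t$-Hausdorff close to $f_t^{-1}(\mathcal A_t\cap B_\sigma)$, which in turn corresponds, under the $\ve_t$-approximation $f_t$, to $\mathcal A_t\cap B_\sigma\subset B_\sigma$ up to error $\ve_t$. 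Combining the last two paragraphs, for every $R>0$ the $R$-ball part of $\mathrm V_{t,\mathfrak p}\cap\mathcal U$ inside $(Y_{\sigma,t}\cap\mathcal U,\omega^{sf}_t,p_t)$ converges in the Hausdorff sense to the $R$-ball part of $\mathcal A_\infty\cap B_\sigma$ inside $(B_\sigma,g_{B_\sigma},p_\infty)$, which is the asserted convergence. The only genuinely delicate point I anticipate is the behaviour near the frontier $\partial\overline B_\sigma$, where $g_{B_\sigma}$ degenerates: one must rule out points of $\mathrm V_{t,\mathfrak p}$ drifting toward the boundary while remaining inside a fixed metric ball. This cannot occur precisely because $g_{B_\sigma}$ is complete, so each fixed ball lies in a fixed compact subset of $B_\sigma$, on which Mikhalkin's Euclidean statement already supplies the required uniformity.
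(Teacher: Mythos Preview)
Your proposal is correct and follows the same approach the paper indicates. The paper does not give a detailed proof of this corollary, merely noting that it follows ``by the collapsing of $\omega^{sf}_{t}$ to $g_{B_{\sigma}}$'' combined with Mikhalkin's amoeba convergence; your argument fills in precisely these details, using $f_t={\rm Log}_t$ as the Gromov--Hausdorff approximation and the completeness of $g_{B_\sigma}$ to localize to compact sets where the Euclidean and $g_{B_\sigma}$ Hausdorff topologies agree.
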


\subsection{Toroidal degeneration}
A  degeneration   $\pi: \mathcal{X}\rightarrow \Delta$ is called simple  toroidal, if for any point $x\in \mathcal{X}$,  there is
  an open neighborhood  $U$ satisfying that
  \begin{itemize}
  \item[i)] $U$ is  isomorphic to an open subset of an affine toric variety $\mathcal{Y}_{\sigma}$, denoted still by $U$.
   \item[ii)]  The restriction of $\pi$ on $U$ is given by a regular function $\mathcal{Z}^{u_{\sigma}}$, where  $u_{\sigma}\in M\cap \check{\sigma}$  satisfies  $\langle u_{\sigma}, v\rangle=1$ for the primitive lattice   vector $v\in \tau\cap N$ of   any  1-dimensional face $\tau$ of $\sigma$. Hence if $D_{1}, \cdots, D_{d}$ are primitive   toric Weil divisors of $\mathcal{Y}_{\sigma}$, then we have that     $X_{0}\cap U= \sum\limits_{j=1}^{d} D_{j} \cap U$, and  $X_{0}$ is reduced.
    \item[iii)] Any non-empty   $X_{0,I}$  is connected and  normal, which implies that any $X_{0,I}$ does not intersect with itself.
  \end{itemize}
       Since
 the canonical divisor $\mathcal{K}_{\mathcal{Y}_{\sigma}}=-\sum\limits_{j=1}^{d} D_{j}$ (cf. \cite{Fulton}), we have  that
  $\mathcal{K}_{\mathcal{X}}|_{U}=- {\rm div}(\mathcal{Z}^{u_{\sigma}})$, and thus $\mathcal{K}_{\mathcal{X}}$  is Cartier, i.e.  $\mathcal{X}$ is Gorenstein. Degenerations with only simple normal crossing singularities are special cases of simple  toroidal degenerations.

In Chapter II of \cite{Mum}, a compact polyhedral complex $\mathcal{B}$ with integral structure, called the dual intersection complex,   is associated to $\pi: \mathcal{X}\rightarrow \Delta$ such that  cells of $\mathcal{B}$ are in one-to-one correspondence to those  non-empty $X_{0,I}$. More precisely, for any $X_{0,I}\neq \emptyset$, there is a unique polyhedral  cell $\overline{B}_{I}\in \mathcal{B}$ such that  $\dim_{\mathbb{R}} \overline{B}_{I}=n-\dim_{\mathbb{C}}X_{0,I}$, and $\overline{B}_{I'}$ is a face of $\overline{B}_{I}$ if and only if  $X_{0,I'}\supset X_{0,I}$.  The cell $\overline{B}_{I}\in \mathcal{B}$ associated to $X_{0,I}$ is constructed as the following. Let   $p\in X_{0,I}\backslash \bigcup\limits_{j \notin I} X_{0,j}$, and $U\subset \mathcal{X}\backslash \bigcup\limits_{j \notin I} X_{0,j}$ be a neighborhood of $p$ isomorphic  to an open subset of an affine  toric variety $\mathcal{Y}_{\sigma}$. If  $\sigma$ is the corresponding rational  convex   cone in $N_{\mathbb{R}}$, then $$\overline{B}_{I}=\{v\in \sigma| \langle v, u_{\sigma}\rangle =1 \}. $$ We denote $B_{I}$   the interior of $\overline{B}_{I}$.

Now we state the main theorem of the present paper.

\begin{theorem}\label{main}  Let $\pi: \mathcal{X}\rightarrow \Delta$ be a simple  toroidal canonical  polarized  degeneration of projective   $n$-manifolds,
 and $\omega_{t}$  be the unique  K\"{a}hler-Einstein metric in  $2 \pi c_{1}(\mathcal{K}_{X_{t}})$, $t\in \Delta^{*}$. If  $\mathcal{X}$ is $\mathbb{Q}$-factorial, then
the  followings hold.
    \begin{itemize}
  \item[i)]  For any $X_{0,I}$ with $\sharp I>1$,
   and any point $p_{0}\in X_{0,I}\backslash \bigcup\limits_{i\notin I}X_{0,i}$, there are points  $p_{t}\in X_{t}$  such that  $p_{t}\rightarrow p_{0}$ in $\mathcal{X}$ when $t\rightarrow 0$, and by passing to  a  sequence,     $(X_{t}, \omega_{t}, p_{t})$ converges to a complete Riemannian manifold $(W, g_{W}, p_{\infty})$ with   $\dim_{\mathbb{R}}W= \dim_{\mathbb{R}}\overline{B}_{I}+ 2 \dim_{\mathbb{C}}X_{0,I} $ in the pointed Gromov-Hausdorff sense.
  \item[ii)]  If  $\dim_{\mathbb{C}} X_{0,I}=0$, then $(W, g_{W})$ is isometric  to $(B_{I},g_{B_{I}})$ by suitably  choosing  $p_{t}$, where  $g_{B_{I}}$ is the complete affine K\"{a}hler metric obtained  in Theorem \ref{CY}. Furthermore, if $\omega^{sf}_{t,I}$ is the semi-flat K\"{a}hler-Einstein metric constructed from $g_{B_{I}}$ in Proposition \ref{pro-0} on a neighborhood of  $U\cap X_{t}$, where $U$ is a neighborhood of $X_{0,I}$ isomorphic an open subset of a toric variety,   then
         $$\|\omega_{t}-\omega^{sf}_{t,I}\|_{C^{\nu}_{loc}(X_{t}\cap U,  \omega^{sf}_{t,I})}\rightarrow 0,$$ for any $\nu>0$, when $t\rightarrow 0$, i.e. the collapsing is in the $C^{\infty}$-sense, and the convergence do not need to pass any sequence.
  \end{itemize}
 \end{theorem}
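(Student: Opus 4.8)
The plan is to build an explicit family of background (approximating) Kähler metrics $\omega_{t}^{B}$ on the fibers $X_{t}$ that is modelled, chart by chart, on the semi-flat Kähler-Einstein metrics $\omega^{sf}_{t,I}$ of Proposition \ref{pro-0}, and then to show by a Ricci-flow/Monge-Ampère perturbation argument that the genuine Kähler-Einstein metric $\omega_{t}$ is $C^{\infty}_{loc}$-close to $\omega_{t}^{B}$ after rescaling distances appropriately. First I would fix $X_{0,I}$ and a toric chart $U\cong$ (open subset of $\mathcal{Y}_{\sigma}$) around a point $p_{0}\in X_{0,I}\setminus\bigcup_{i\notin I}X_{0,i}$, with $\pi|_{U}=\mathcal{Z}^{u_{\sigma}}$; here the dual intersection complex gives $\overline{B}_{I}=\{v\in\sigma\mid\langle v,u_{\sigma}\rangle=1\}$, a rational polytope carrying the integral affine structure, and one has the torus fibration $f_{t}\colon Y_{\sigma,t}\cap\mathcal{U}\to B_{\sigma}$. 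On such a chart $\omega^{sf}_{t,I}$ from Theorem \ref{CY}/Proposition \ref{pro-0} is already Kähler-Einstein with $\operatorname{Ric}=-\omega^{sf}_{t,I}$, and the key metric facts — torus fibers of diameter $\sim-(\log|t|)^{-1}\to 0$, base $(B_{I},g_{B_{I}})$ complete — are in hand. The next step is a gluing: patch the local semi-flat models over the finitely many charts covering a neighborhood of $X_{0}$ (using the $\mathbb{Q}$-factoriality of $\mathcal{X}$ to control the birational transitions and to ensure the pieces $\overline{B}_{I}$ assemble into $\mathcal{B}$ compatibly), producing a global closed $(1,1)$-form $\omega_{t}^{B}$ in $2\pi c_{1}(\mathcal{K}_{X_{t}})$ whose Ricci form differs from $-\omega_{t}^{B}$ only by an error supported away from the chart of interest and controlled, in the rescaled $\omega^{sf}_{t,I}$-geometry, by a quantity tending to $0$ with $t$.

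Having $\omega_{t}^{B}$, I would write $\omega_{t}=\omega_{t}^{B}+\ddbar\varphi_{t}$ and reduce the Kähler-Einstein condition to the complex Monge-Ampère equation $(\omega_{t}^{B}+\ddbar\varphi_{t})^{n}=e^{\varphi_{t}+F_{t}}(\omega_{t}^{B})^{n}$ where $e^{F_{t}}=(-\operatorname{Ric}(\omega_{t}^{B}))^{n}/(\omega_{t}^{B})^{n}$ measures the failure of $\omega_{t}^{B}$ to be Einstein. The two ingredients are: (a) an a priori $C^{0}$ bound $\|\varphi_{t}\|_{L^{\infty}}\to 0$, obtained from the smallness of $F_{t}$ via the maximum principle (on each fiber $X_{t}$, compact, so the standard Aubin-Yau argument applies, with the negative sign making the estimate particularly clean), and (b) local higher-order estimates: once $\varphi_{t}$ is small in $C^{0}$, a Chern-Lu / Schauder bootstrap performed in the rescaled metric $\omega^{sf}_{t,I}$ on geodesic balls in the collapsing geometry yields $\|\varphi_{t}\|_{C^{\nu}_{loc}(U\cap X_{t},\omega^{sf}_{t,I})}\to 0$ for every $\nu$. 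This gives part (ii)'s quantitative statement $\|\omega_{t}-\omega^{sf}_{t,I}\|_{C^{\nu}_{loc}}\to 0$ without passing to a subsequence; part (i) then follows by Gromov precompactness together with the smooth local convergence, identifying the pointed Gromov-Hausdorff limit as a Riemannian manifold $W$, and a dimension count: the collapsed directions are exactly the torus fibers of $f_{t}$ (real dimension $2\dim_{\mathbb{C}}X_{0,I}$ of them survive after collapse? — more precisely the $n-\sharp I+1 = \dim_{\mathbb{R}}\overline{B}_{I}$ base directions plus the $2\dim_{\mathbb{C}}X_{0,I}$ non-collapsed torus/fiber directions persist), giving $\dim_{\mathbb{R}}W=\dim_{\mathbb{R}}\overline{B}_{I}+2\dim_{\mathbb{C}}X_{0,I}$. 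When $\dim_{\mathbb{C}}X_{0,I}=0$ the chart is the pure semi-flat model over $B_{I}$, the torus fibers collapse entirely, and $W\cong(B_{I},g_{B_{I}})$ with $\phi$ solving the Cheng-Yau equation $\det(\partial^{2}\phi/\partial x_{i}\partial x_{j})=\kappa e^{2\phi}$, which is exactly what Theorem \ref{CY} delivers.

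The main obstacle, I expect, is the \emph{uniformity of the higher-order estimates in the collapsing regime}: the metrics $\omega^{sf}_{t,I}$ degenerate (fiber directions shrink like $(\log|t|)^{-1}$), so the curvature and injectivity radius are not bounded in the naive sense, and one must carefully rescale — essentially unwrap the collapsing torus to its universal cover $B_{\sigma}\times\sqrt{-1}\Lambda_{\mathbb{R}}$ where $\omega^{sf}$ is a fixed complete Kähler-Einstein metric — and prove that the pulled-back equation has coefficients and error terms with bounds independent of $t$ on balls of fixed radius in the unwrapped geometry. Making the gluing of the local models into a global $\omega_{t}^{B}$ with a globally controlled Ricci error (particularly across the strata where several $X_{0,i}$ meet, and where $\mathbb{Q}$-factoriality rather than smoothness of $\mathcal{X}$ is the only structural input) is the other delicate point; here one uses that $\mathcal{K}_{\mathcal{X}}$ is Cartier (Gorenstein, from the toric local structure) to make sense of $2\pi c_{1}(\mathcal{K}_{X_{t}})$ and to interpolate the local Kähler potentials with a partition of unity subordinate to the toric charts, absorbing the resulting $\ddbar$-errors into $F_{t}$ and checking they are $o(1)$ in the rescaled norms.
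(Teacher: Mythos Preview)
Your outline diverges from the paper's proof in a crucial structural way, and the divergence creates a genuine gap.

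The paper does \emph{not} build the background metric by gluing local semi-flat models. Instead it uses a single global background $\tilde{\omega}_{t}$ of Carlson--Griffiths (Poincar\'e) type, defined intrinsically from Hermitian metrics on the $\mathbb{Q}$-Cartier divisors $\mu X_{0,i}$: with $\alpha_{i}=\tfrac{1}{\mu}\log\|s_{i}\|_{i}^{2}$ and $\chi_{t}=(\log|t|^{2})^{2}\prod_{i}\alpha_{i}^{-2}$ one sets $\tilde{\omega}_{t}=\sqrt{-1}\partial\bar\partial\log(\chi_{t}V_{t})$. This metric is globally defined without any gluing, and the paper only establishes $|\varphi_{t}|\leqslant C_{1}$ and $C_{2}^{-1}\tilde{\omega}_{t}\leqslant\omega_{t}\leqslant C_{2}\tilde{\omega}_{t}$ (Proposition~3.1)---never $\|\varphi_{t}\|_{L^{\infty}}\to 0$. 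The semi-flat structure is not imposed via the background but \emph{emerges in the limit}: one lifts to the universal cover via the rescaled logarithmic map $P_{t}$ (so $z_{j}=e^{(\log|t|)w_{j}}$), proves that $P_{t}^{*}q_{\sigma}^{*}\tilde{\omega}_{t}$ and $P_{t}^{*}q_{\sigma}^{*}(\chi_{t}V_{t})$ converge smoothly to explicit $t$-independent limits (Lemma~3.2), uses Evans--Krylov plus Schauder to get subsequential $C^{\infty}_{\mathrm{loc}}$ convergence of $\varphi_{t}\circ q_{\sigma}\circ P_{t}$ to some $\varphi_{0}$, and then observes that since $\varphi_{t}\circ q_{\sigma}\circ P_{t}$ is periodic with period $\tfrac{2\pi}{\log|t|}\to 0$ in the $\mathrm{Im}(w_{j})$ variables, the limit $\varphi_{0}$ must be independent of these variables (Lemma~3.3). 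This periodicity argument is the heart of the proof and is absent from your sketch.

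Your scheme has two concrete problems. First, for part~(i) the local semi-flat model is $\tilde{\omega}^{o}+\sqrt{-1}\partial\bar\partial\varphi_{0}$, where $\varphi_{0}$ solves a mixed real/complex Monge--Amp\`ere equation on $B_{I}\times(U\cap X_{0,I})$ whose solution is \emph{not} furnished a priori by Cheng--Yau---it is only obtained as the subsequential limit of the actual KE potentials. You therefore cannot use it as the background before proving convergence; this is circular. Second, the global gluing of semi-flat models across different strata with a Ricci error $F_{t}\to 0$ is not substantiated: near a deeper stratum $X_{0,J}\supset X_{0,I}$ the relevant semi-flat model changes (different torus rank, different base polytope), and controlling the transition in the collapsing geometry is exactly the difficulty the Carlson--Griffiths background is designed to sidestep. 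Only in part~(ii), where $\dim_{\mathbb{C}}X_{0,I}=0$, is the limit potential identified a priori---and the paper does this a posteriori, via the uniqueness of the Cheng--Yau solution (Lemma~3.5), which also upgrades the subsequential convergence to convergence of the full family.
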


This theorem describes the  collapsed  limits  of $\omega_{t}$,    while the previous  results of  \cite{Tian,leung,ruan1,ruan2} describe  the non-collapsed limits, i.e. they still have complex dimension $n$.

The notion of    toroidal   degeneration is an   algebro-geometric analogue  of $F$-structure introduced in \cite{CC1}.  An $F$-structure $\mathcal{F}$ on a smooth manifold $X$ consists an open covering $\{U_{\alpha}\}$ such that for each $U_{\alpha}$, there is an effective $T^{n_{\alpha}}$-action on a finite cover of $U_{\alpha}$, and on any overlap $U_{\alpha}\cap U_{\beta}$, these two torus actions $T^{n_{\alpha}}$ and $T^{n_{\beta}}$ are compatible in a certain sense (See \cite{Fukaya2} for the details).  For  a toroidal   degeneration $\pi: \mathcal{X}\rightarrow \Delta$, a small neighborhood $U$ of a $X_{0,I}$ with $\sharp I>1$ is isomorphic to an open subset of a toric variety, and $X_{t}\cap U$ is given by a monomial. Thus there is a natural local $T^{n_{\alpha}}$-action on $X_{t}\cap U$. We conjecture that there is an $F$-structure $\mathcal{F}$ on $X_{t} \cap \mathfrak{U}$, where $\mathfrak{U}$ is   a small neighborhood of $\bigcup\limits_{\sharp I >1}X_{0,I}$ in $\mathcal{X}$, and more importantly,  this $\mathcal{F}$ is Hamiltonian, i.e. there is a symplectic form $\varpi_{t}$ on $X_{t}$ such that   any local torus action of $\mathcal{F}$  is Hamiltonian.

Theorem \ref{main} and Proposition \ref{prop22} in Section 3.2 show that the K\"{a}hler-Einstein metric $\omega_{t}$ approximates some local semi-flat K\"{a}hler-Einstein metrics  $\omega^{sf}_{t,I}$ on small open subsets of $X_{t}$, and $\omega^{sf}_{t,I}$ collapses smoothly to lower dimensional spaces along  local torus fibrations.
 Moreover, we would see that the curvature of $\omega_{t}$ is bounded independent of $t$ in Section 3.1. Hence there is an   $F$-structure $\mathcal{F}'$ on some region of $X_{t}$ by \cite{CC2}, and we again conjecture that   $\mathcal{F}'$ can be made to coincide with the above $\mathcal{F}$.   Hamiltonian $F$-structures would be studied in a separate paper.

 We remark that Theorem \ref{main} should hold for more general settings, for example,  toroidal   degenerations without the assumption of $\mathcal{X}$ being $\mathbb{Q}$-factorial as in \cite{ruan3}, or the log  pair case, i.e. $\mathcal{K}_{\mathcal{X}/\Delta}+D$ is ample for a Cartier  divisor $D$, as in \cite{Tian,leung}.  For avoiding too many technique difficulties, we leave those generalizations for future studies.   In a  recent paper \cite{BG},  the existence of singular K\"{a}hler-Einstein metrics is obtained  for stable varieties, i.e. varieties with semi-log canonical singularities and ample canonical divisor. It is also expected   that the convergence theorems  of \cite{Tian,leung,ruan1,ruan2,ruan4} can be generalized to degenerations with central fiber $X_{0}$ stable varieties (cf. \cite{BG,Song}), which is  related to the question of differential geometric understanding of  the moduli space for stable varieties.

 We finish this section by showing  an example that Theorem \ref{main} and Theorem  \ref{thm01}  can apply.

 \begin{example} Firstly, we recall the standard Mumford degeneration of toric varieties.  Let $M'\cong \mathbb{Z}^{n}$ such that $M\cong M'\times \mathbb{Z}$,  and  $\mathcal{P}\subset M'_{\mathbb{R}}=M'\otimes_{\mathbb{Z}}\mathbb{R}$ be a lattice polytope. If $\psi: \mathcal{P}\rightarrow \mathbb{R}$ is a piecewise linear convex  function respecting to a lattice  polyhedral decomposition $ \mathfrak{P}$ of $\mathcal{P}$ with integral slopes, we define a lattice  polyhedron $$\tilde{\mathcal{P}}=\{(v,r)\in M_{\mathbb{R}}\cong M'_{\mathbb{R}}\times \mathbb{R}|  \psi (v)\leq r \},$$ which determines a toric variety $X_{\mathcal{P}}$ with a regular function $\pi=\mathcal{Z}^{(0,1)}: X_{\tilde{\mathcal{P}}} \rightarrow \mathbb{C}$.   For any $t\in \mathbb{C}\backslash \{0\}$, $X_{t}=\pi^{-1}(t)$ is isomorphic to  the toric variety $X_{\mathcal{P}}$  associated  to $\mathcal{P}$, and $X_{0}=\pi^{-1}(0)=\bigcup\limits_{\tau \in \mathfrak{P}_{{\rm max}}}X_{\tau}$, where $\mathfrak{P}_{{\rm max}}$ denotes the set of $n$-dimensional polytopes of $ \mathfrak{P}$, and $X_{\tau}$ is the toric variety associated  to $\tau\in \mathfrak{P}_{{\rm max}}$. By choosing $\mathcal{P}$ and $\psi$ properly, we can assume that  $X_{0}$ has only simple normal crossing singularities, and $X_{t}$ is smooth for any  $t\neq 0$.  For instance, we take $\mathcal{P}$, $ \mathfrak{P}$   and $\psi$ as the following:

 \setlength{\unitlength}{0.4cm}
\begin{picture}(6,4)
\thicklines
\put(1,1){\line(0,1){2}}\put(1,3){\line(1,0){2}}\put(3,1){\line(0,1){2}}
\put(1,1){\line(1,0){2}}
 \put(1,2){\line(1,0){1}}
  \put(2,1){\line(0,1){1}}
  \put(2,2){\line(1,1){1}}
  \put(1,1){\circle*{0.3}}
   \put(1,2){\circle*{0.3}}
    \put(2,1){\circle*{0.3}}
     \put(2,2){\circle*{0.3}}
       \put(1,3){\circle*{0.3}}
               \put(3,1){\circle*{0.3}}    \put(2,3){\circle*{0.3}}     \put(3,2){\circle*{0.3}}     \put(3,3){\circle*{0.3}}
          \put(-1,2){$-u_{1}$}\put(1,0){$-u_{2}$} \put(3,3){$u_{1}+u_{2}$}
          \put(5,2){$\psi(-u_{1})=0$, $\psi(-u_{2})=0$, $\psi(u_{1}+u_{2})=1$.}
\end{picture}

 Now we follow the argument in the proof of Lemma 1.4 in \cite{Kol2}. Let $H$ be a sufficiently general  very ample divisor on $X_{\tilde{\mathcal{P}}}$ such that $\mathcal{K}_{ X_{\tilde{\mathcal{P}}}}\otimes \mathcal{O}(H) $ is   ample, and   $H+X_{t}$  has     simple normal crossing singularities for any $|t|<\varepsilon \ll 1$. If  $\mathfrak{c}:  \tilde{X}_{\tilde{\mathcal{P}}} \rightarrow X_{\tilde{\mathcal{P}}}$ is the   double ramified cover along $2H$, then the Hurwitz formula shows that $\mathcal{K}_{ \tilde{X}_{\tilde{\mathcal{P}}}}\cong \mathfrak{c}^{*}(\mathcal{K}_{ X_{\tilde{\mathcal{P}}}}\otimes \mathcal{O}(H))$, and hence, $\mathcal{K}_{ \tilde{X}_{\tilde{\mathcal{P}}}}$ is ample.  Note that $\tilde{X}_{0}=\mathfrak{c}^{-1}(X_{0})$ still has only simple normal crossing singularities, and for any $t$ with $0<|t|\ll 1$, $\tilde{X}_{t}=\mathfrak{c}^{-1}(X_{t})$ is  smooth.  We obtain  a canonical degeneration $\tilde{\pi}: \mathcal{X}\rightarrow \Delta\subset \mathbb{C}$ satisfying the hypothesises  in Theorem   \ref{main} and Theorem  \ref{thm01} by letting $\tilde{\pi}=\pi\circ \mathfrak{c}$ and $\mathcal{X}=\tilde{\pi}^{-1}(\Delta)$.

 \end{example}

\subsection{Special lagrangian submanifold}
The original SYZ conjecture  asserts  the existence of special lagrangian submanifolds  when Calabi-Yau manifolds are near    the large complex limit (cf. \cite{SYZ}). There are some attempts to generalize the  SYZ conjecture to the case of canonical polarized manifolds (cf. \cite{LW}), which include analog notions for special lagrangian submanifold.  We also like to study   a generalization of  special lagrangian submanifold.

 If  $X$ is  a canonical polarized  projective $n$-manifold, then by definition, the canonical bundle $\mathcal{K}_{X}$ is ample.
    Let $\Omega$ be a holomorphic $n$-form, and $D$ be the effective divisor defined by $\Omega$, i.e. $D={\rm div} (\Omega)$.  The restriction of $\Omega$ on $X\backslash D$ is no-where vanishing,  and thus $\mathcal{K}_{X\backslash D}$ is trivial, i.e.  $X\backslash D$ is a quasi-projective Calabi-Yau manifold. A submanifold $L $ of $X\backslash D$ is called a generalized special lagrangian submanifold respecting  to $\Omega$ and a K\"{a}hler metric $\omega$,  if $\dim_{\mathbb{R}}L=n$, $$ \omega|_{L}\equiv 0,  \  \  {\rm and} \  \ {\rm Im}(\Omega)|_{L}\equiv 0.$$
This notion of generalized special lagrangian submanifold is standard in the case of non-Ricci flat metric (cf. \cite{GHJ,ruan3}).   The real part ${\rm Re}(\Omega)$ is not a calibration respecting to the K\"{a}hler metric $\omega$, but to a non-K\"{a}hler  Hermitian metric  $\rho\omega$ by Section 10.5 in  \cite{GHJ}, where $\rho>0$ is a function defined by $\rho^{n}\omega^{n}=\frac{n!}{2^{n}}(-1)^{\frac{n^{2}}{2}} \Omega \wedge \overline{\Omega}$.

As an application of Theorem \ref{main}, we have the following theorem.

\begin{theorem}\label{main2} Let $\pi: \mathcal{X}\rightarrow \Delta$  and  $\omega_{t}$ be the same as in Theorem \ref{main}.  Assume that there is a
  zero dimensional    $X_{0,I}$. If  $\Omega_{t}$ is a section of $\mathcal{K}_{\mathcal{X}/\Delta}$ such that $D= {\rm div}(\Omega_{t})$ does not intersect with  $X_{0,I}$, then  there is a generalized special lagrangian torus  $L_{t}\subset  ( X_{t} \backslash D_{t})$ respecting  to  $ \omega_{t}$ and $e^{\sqrt{-1}\vartheta_{t}}\Omega_{t}|_{X_{t}}$ for any  $0< |t|\ll 1$ and a  phase $\vartheta_{t} \in \mathbb{R} $,   where  $D_{t}= D\cap X_{t}$.
 \end{theorem}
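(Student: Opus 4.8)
The goal is to produce, inside a small punctured neighborhood $U \subset \mathcal{X}$ of the zero-dimensional stratum $X_{0,I}$, a generalized special Lagrangian torus $L_t \subset X_t \cap U$ for $e^{\sqrt{-1}\vartheta_t}\Omega_t|_{X_t}$ with respect to $\omega_t$. The key input is part (ii) of Theorem \ref{main}: since $\dim_{\mathbb{C}} X_{0,I}=0$, the metric $\omega_t$ on $X_t \cap U$ is $C^\infty_{loc}$-close to the semi-flat model $\omega^{sf}_{t,I}$ which, under the fibration $f_t : Y_{\sigma,t}\cap\mathcal{U} \to B_I$, restricts to a flat metric on each torus fiber $f_t^{-1}(x)$. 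The natural candidates for $L_t$ are the torus fibers $f_t^{-1}(x)$, $x\in B_I$, which are totally real of real dimension $n$. So the strategy is: (1) show that on a fiber $f_t^{-1}(x)$ one has $\omega^{sf}_{t,I}|_{f_t^{-1}(x)} \equiv 0$ automatically, since $\omega^{sf}$ on $B_\sigma\times\sqrt{-1}\Lambda_{\mathbb{R}}$ is $2\ddbar\phi$ with $\phi=\phi(x)$ depending only on the base coordinates, so its restriction to $\{x=\mathrm{const}\}$, i.e. to a torus in the $\theta$-directions, vanishes; (2) correct for the error $\omega_t - \omega^{sf}_{t,I}$, which is $C^\infty_{loc}$-small, by perturbing $f_t^{-1}(x)$ to a nearby Lagrangian torus for the honest metric $\omega_t$ (Moser/Weinstein-type isotopy, or an implicit function theorem in the space of graphs over $f_t^{-1}(x)$ in the normal directions); (3) adjust the phase: choose $\vartheta_t$ so that $\Im(e^{\sqrt{-1}\vartheta_t}\Omega_t)$ restricts to zero on (the perturbed) $L_t$.

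\textbf{Carrying it out.} First I would write $\Omega_t|_{X_t}$ in the toric coordinates near $X_{0,I}$: since $\mathcal{K}_{\mathcal{X}}|_U = -\mathrm{div}(\mathcal{Z}^{u_\sigma})$ and $D$ misses $X_{0,I}$, the form $\Omega_t|_{X_t}$ is, up to a nonvanishing holomorphic function, the standard holomorphic volume form of the torus $T_N$ intersected with $Y_{\sigma,t}$; in the coordinates $w_1,\dots,w_n$ on $Y_{t,m_0}(B_I)$ it is (a nonvanishing multiple of) $dw_1\wedge\cdots\wedge dw_n$. On a fiber $\{x = \mathrm{const}\}$ parametrized by $\theta_1,\dots,\theta_n$, the pullback of $dw_1\wedge\cdots\wedge dw_n = \prod(dx_j + \sqrt{-1}\,d\theta_j)$ is $(\sqrt{-1})^n\, d\theta_1\wedge\cdots\wedge d\theta_n$, which is a nonvanishing real multiple of $i^n$ times a real volume form; thus for the model, $\Im(e^{\sqrt{-1}\vartheta}\,dw_1\wedge\cdots\wedge dw_n)$ vanishes on the fiber once $\vartheta$ is chosen to kill the constant phase $n\pi/2$ plus the (locally constant, to leading order) phase of the nonvanishing holomorphic multiplier. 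This settles the model case exactly: each fiber $f_t^{-1}(x)$ is special Lagrangian for $(\omega^{sf}_{t,I}, e^{\sqrt{-1}\vartheta}\Omega_t)$. Then I would invoke the stability of (special) Lagrangian submanifolds under small perturbations of the ambient structures: given the $C^\infty_{loc}$-smallness of $\omega_t - \omega^{sf}_{t,I}$ from Theorem \ref{main}(ii), and since $f_t^{-1}(x)$ is compact with trivial normal bundle in a collar, McLean's deformation theory (or a direct implicit function theorem on normal graphs, using that the fiber is a torus so $H^1$ is nontrivial but the SLag condition cuts it down to a point once the phase and the base point $x$ are allowed to move) produces a genuine $L_t$ that is Lagrangian for $\omega_t$ and on which $\Im(e^{\sqrt{-1}\vartheta_t}\Omega_t)$ vanishes, for suitable $\vartheta_t$ near the model phase, for all $0<|t|\ll 1$.

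\textbf{Main obstacle.} The delicate point is the perturbation step. The torus $f_t^{-1}(x)$ is \emph{not} rigid — $b_1 = n$ — so the Lagrangian (let alone special Lagrangian) deformation problem has a nontrivial kernel, and naively the implicit function theorem does not close up. The resolution is that the relevant Fredholm problem is not over a fixed fiber: one has an $n$-parameter family of model special Lagrangians (the fibers over $x\in B_I$) together with the phase parameter, and the McLean moduli space of SLags near $f_t^{-1}(x)$ for the \emph{model} is itself $n$-dimensional (it is $B_I$, via the affine/tropical structure), matching the dimension of the obstruction; the honest structure $(\omega_t, \Omega_t)$ is a small perturbation of the model, and by the openness of the unobstructed McLean moduli space under $C^\infty_{loc}$-small perturbations (compactness of the fiber plus the $C^\nu$ bounds, uniform in $t$) this moduli space persists, in particular is nonempty, giving $L_t$. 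A secondary technical nuisance is keeping all constants (injectivity radius, the $C^\nu$ norm of $\omega_t-\omega^{sf}_{t,I}$, the size of the collar) uniform in $t$ as the fibers collapse; here one works in the rescaled picture where $\omega^{sf}_{t,I}$ has bounded geometry on the collar, which is exactly the content of the uniform $C^\nu_{loc}(\omega^{sf}_{t,I})$ estimate in Theorem \ref{main}, so the perturbation analysis is carried out at fixed scale and the collapsing only enters through the (harmless) shrinking diameter of $L_t$.
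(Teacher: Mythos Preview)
Your outline matches the paper's approach closely: exhibit a fiber of the torus fibration as special Lagrangian for the semi-flat model, then perturb to the honest data $(\omega_t,\Omega_t)$. The paper does exactly this, after rescaling via $\tilde w_j=(\log|t|)(w_j-p_j)$ to a fixed-size torus $L_0=\{0\}\times\sqrt{-1}(\mathbb{R}^n/2\pi\mathbb{Z}^n)\subset Y_\infty$, and then invoking the persistence theorem for compact special Lagrangians (Theorem~10.8 in \cite{GHJ}).

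There is, however, a genuine gap in your argument. The persistence/Moser/McLean step you invoke has \emph{cohomological hypotheses} that you never verify: one needs $[\omega_t|_{L_0}]=0$ in $H^2(L_0,\mathbb{R})$ and $[\Im(e^{\sqrt{-1}\vartheta_t}\Omega_t)|_{L_0}]=0$ in $H^n(L_0,\mathbb{R})$. Knowing that $\omega_t-\omega^{sf}_{t,I}$ is $C^\infty_{loc}$-small only tells you the periods of $\omega_t$ on $2$-cycles in the fiber are \emph{small}, not zero; and since $b_2(L_0)=\binom{n}{2}>0$ for $n\ge 2$, this is not vacuous. The paper closes this with an integrality argument: $\int_A q_\sigma^*\omega_t=2\pi\int_{q_\sigma(A)}c_1(\mathcal{K}_{X_t})\in 2\pi\mathbb{Z}$, while the rescaled convergence gives $(\log|t|)^2\int_A q_\sigma^*\omega_t\to\int_A\omega_\infty=0$; an integer that tends to zero is zero. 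For the $H^n$-condition the paper fixes $\vartheta_t$ \emph{before} perturbing so that $e^{\sqrt{-1}\vartheta_t}\int_{L_0}q_\sigma^*\Omega_t\in\mathbb{R}$, which kills the one-dimensional class; your step~(3) has the order reversed (you choose $\vartheta_t$ after producing $L_t$), which is circular.

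By contrast, the issue you flag as the ``main obstacle'' --- the $n$-dimensional kernel coming from $b_1(L_0)=n$ --- is not an obstruction at all: the persistence theorem already accounts for this moduli (the deformed special Lagrangian is only determined up to motion in an $n$-dimensional family, which is harmless for existence). So your energy is spent on a non-problem while the actual hypothesis that needs checking is left unaddressed.
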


\section{Proof of Theorem  \ref{main}}

   \subsection{Background metric}
   In this section, we use the construction in  \cite{ruan1} to obtain some approximation    background K\"{a}hler  metrics, which are  uniformly equivalent to K\"{a}hler-Einstein metrics.

   Let $\pi: \mathcal{X}\rightarrow \Delta$ be a simple  toroidal canonical  polarized  degeneration of projective   $n$-manifolds such  that $\mathcal{X}$ is $\mathbb{Q}$-factorial.
 Since  $\mathcal{K}_{\mathcal{X}/ \Delta}$ is relative  ample, there is an embedding $\Phi: \mathcal{X}\hookrightarrow \mathbb{CP}^{N_{m}}\times \Delta$ for two integers $m>0$ and $N_{m}>0$ such that $\mathcal{K}_{\mathcal{X}/ \Delta}^{m}\cong \Phi^{*}\mathcal{O}_{\mathbb{CP}^{N_{m}}}(1)$.  There are sections $\Psi_{0}, \cdots, \Psi_{N_{m}}$ of $\mathcal{K}_{\mathcal{X}/ \Delta}^{m}$ such that, by abusing notions,  $h_{FS}=(\sum\limits_{k=0}^{N_{m}}|\Psi_{k}|^{2})^{-\frac{1}{m}}$ is the Hermitian metric whose curvature is the Fubini-Study metric, i.e.    \begin{equation}\label{e2.1} \omega^{o}=\Phi^{*}(\frac{1}{m}\omega_{FS}+\sqrt{-1}dt\wedge d\bar{t})=\sqrt{-1}\partial\overline{\partial} \log (\sum_{k=0}^{N_{m}}|\Psi_{k}|^{2})^{\frac{1}{m}}.  \end{equation}
 By regarding volume forms as Hermitian metrics of the anti-canonical bundle, we obtain a volume form $V= (\sum\limits_{k=0}^{N_{m}}|\Psi_{k}|^{2})^{\frac{1}{m}}$ on $\mathcal{X}$.
  For any $t\in \Delta^{*}$,   $  V_{t}=V\otimes (dt\wedge d\overline{t})^{-1}$  is a smooth volume form   on  $X_{t}$,  and let
   \begin{equation}\label{e2.2} \omega_{t}^{o}=\omega^{o}|_{X_{t}}=\sqrt{-1}\partial\overline{\partial} \log V_{t} .\end{equation}

Since $\mathcal{X}$ is $\mathbb{Q}$-factorial, there is a $\mu\in \mathbb{N}$ such that all of  $\mu X_{0,i}$, $i=1, \cdots, l$,  are Cartier divisors.
Let $\|\cdot \|_{i}$ be a smooth  Hermitian metric of  $\mathcal{O}(\mu X_{0,i})$ on $\mathcal{X}$, and $s_{i}$ be a defining  section of $\mu X_{0,i}$, i.e. ${\rm div}(s_{i})=\mu X_{0,i}$. Here the  Hermitian metric $\|\cdot \|_{i}$ being smooth means that   $\|\cdot \|_{i}$ is locally given by the restriction of  a smooth positive function $\varrho$ on  the ambient space $\mathbb{C}^{\nu}$ for a local embedding of an open subset $U$ of $\mathcal{X}$ into $\mathbb{C}^{\nu}$, and a  trivialization of  $\mathcal{O}(\mu X_{0,i})$ on $ U$. In this case, ${\rm Ric}(\|\cdot\|_{i})$ is the restriction of the smooth form $-\sqrt{-1}\partial\overline{\partial}\log \varrho$ on $\mathbb{C}^{\nu}$.

We assume that $s_{1}\cdot\cdots s_{l}=t^{\mu}$ by choosing the parameter $t\in \Delta$ appropriately.
Let  \begin{equation}\label{e2.3}  \alpha_{i}=\frac{1}{\mu}\log \| s_{i} \|^{2}_{i},   \  \   \    \   \     \chi_{t}=(\log|t|^{2})^{2}\prod_{i=1}^{l}\alpha_{i}^{-2},   \end{equation}
  and
 \begin{equation}\label{e2.4}\begin{split}
\tilde{\omega}_{t} & = \sqrt{-1}\partial\overline{\partial}\log \chi_{t}  V_{t}\\ & =   \omega^{o}_{t} +\sqrt{-1}\partial\overline{\partial} \log \chi_{t}\\ & =  \omega^{o}_{t}+2\sum_{i=1}^{l}(\frac{{\rm Ric}(\|\cdot\|_{i})}{\alpha_{i}}+\sqrt{-1}\frac{\partial \alpha_{i} \wedge \overline{\partial}\alpha_{i}}{\alpha_{i}^{2}})|_{X_{t}}
\end{split}\end{equation} on $X_{t}$ for $t\neq 0$.
We can assume that   $\|s_{i} \|_{i}\leq\varepsilon \ll 1$      such that   $$\frac{1}{2}\omega^{o}\leq \omega^{o}+\sum\limits_{i=1}^{l}\frac{2}{\alpha_{i}}{\rm Ric}(\|\cdot\|_{i}) \leq 2 \omega ^{o}$$ on $ \mathcal{X}\backslash X_{0} $ by multiplying certain constants if necessary.
We denote $X_{0,I}^{o}=X_{0,I}\backslash \bigcup\limits_{i \notin I}X_{0,i}  $, and  define a complete  K\"{a}hler metric  \begin{equation}\label{e2.5++} \tilde{\omega}_{0,I}   = \omega^{o}|_{X_{0,I}^{o}}+2\sum\limits_{i \notin I}(\frac{{\rm Ric}(\|\cdot\|_{i})}{\alpha_{i}}+\sqrt{-1}\frac{\partial \alpha_{i} \wedge \overline{\partial}\alpha_{i}}{\alpha_{i}^{2}})|_{X_{0,I}^{o}} \end{equation} on $X_{0,I}^{o}$.

The K\"{a}hler metric $\tilde{\omega}_{t}$ is the background metric we need.
 Note that our assumption of $\mathcal{X}$ is stronger than the one in  \cite{ruan1}, and however is weaker than that in  \cite{ruan2}. Nevertheless,   the arguments in Section 3 of \cite{ruan1} and Section 4 of  \cite{ruan2} show that the curvature of $\tilde{\omega}_{t}$ and the Ricci potential $\log (\frac{V_{t}}{\tilde{\omega}_{t}^{n}}) $ are  bounded independent of $t$, which can also be obtained by the calculation in Section 3.2.   Thus we have the $C^{0}$ and $C^{2}$ estimates for the potential function of the K\"{a}hler-Einstein metric by the standard estimates for Monge-Amp\`{e}re equations (cf. \cite{aubin, Yau1}).

\begin{proposition}\label{prop}
    Let $\varphi_{t}$ be  the  unique solution of Monge-Amp\`{e}re equation  \begin{equation}\label{mae}(\tilde{\omega}_{t}+\sqrt{-1}\partial\overline{\partial}\varphi_{t})^{n}=e^{\varphi_{t}}\chi_{t}V_{t}, \  \  \end{equation}   and  $  \omega_{t}=\tilde{\omega}_{t}+\sqrt{-1}\partial\overline{\partial}\varphi_{t}$ be the K\"{a}hler-Einstein metric on $X_{t}$.
     Then $$|\varphi_{t}|\leq C_{1},  \  \  {\rm  and }  \  \   C_{2}^{-1}\tilde{\omega}_{t}\leq \omega_{t} \leq C_{2} \tilde{\omega}_{t},$$ for  constants $C_{1}>0$ and $C_{2}>0$ independent of $t$.
  \end{proposition}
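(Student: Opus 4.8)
The plan is to deduce Proposition \ref{prop} from the standard a priori estimates for complex Monge--Amp\`ere equations, exactly as in \cite{aubin,Yau1}, once we know that the geometry of the background metric $\tilde\omega_t$ is controlled uniformly in $t$. Concretely, the two inputs I would isolate first are: (a) a uniform bound $|\log(V_t/\tilde\omega_t^n)|\le C$ for the Ricci potential of $\tilde\omega_t$, equivalently a two-sided bound $c\,\tilde\omega_t^n\le \chi_t V_t\le c^{-1}\tilde\omega_t^n$; and (b) a uniform lower bound on the bisectional (or at least Ricci) curvature of $\tilde\omega_t$, together with the fact that $(X_t,\tilde\omega_t)$ is complete with bounded geometry in the relevant sense. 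Both (a) and (b) are asserted in the excerpt to follow from the computations of Section 3 of \cite{ruan1} and Section 4 of \cite{ruan2}, or alternatively from the local calculations carried out in Section 3.2; I would invoke those, the only point to check being that the present $\mathbb Q$-factorial toroidal hypothesis is intermediate between the two cited settings, so the estimates transfer verbatim.

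Given (a), the $C^0$ estimate $|\varphi_t|\le C_1$ follows from the Yau/Aubin maximum-principle argument: at an interior maximum of $\varphi_t$ one has $\sqrt{-1}\partial\bar\partial\varphi_t\le 0$, hence $e^{\varphi_t}\chi_t V_t=(\tilde\omega_t+\sqrt{-1}\partial\bar\partial\varphi_t)^n\le\tilde\omega_t^n$, which with (a) gives $\varphi_t\le C$; the minimum is handled symmetrically. Since $X_t$ is compact the extrema are attained, so no completeness subtlety arises here. For the $C^2$ estimate I would apply the Aubin--Yau second-order inequality to the quantity $\log\operatorname{tr}_{\tilde\omega_t}\omega_t - A\varphi_t$ for a large constant $A$ depending only on the lower curvature bound in (b): the Laplacian computation, combined with the Einstein equation $\Ric(\omega_t)=-\omega_t$ and the $C^0$ bound already obtained, yields at a maximum point an inequality forcing $\operatorname{tr}_{\tilde\omega_t}\omega_t\le C$; feeding this back into the Monge--Amp\`ere equation and using $|\varphi_t|\le C_1$ bounds $\operatorname{tr}_{\omega_t}\tilde\omega_t$ as well, giving the two-sided bound $C_2^{-1}\tilde\omega_t\le\omega_t\le C_2\tilde\omega_t$ with $C_2$ independent of $t$.

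The main obstacle is not the elliptic estimates themselves, which are textbook, but making sure the maximum-principle arguments are legitimate on the \emph{non-compact} pieces that appear in the proof and that all constants are genuinely $t$-independent. On the compact fibers $X_t$ there is no issue for $\varphi_t$, but the curvature and Ricci-potential bounds for $\tilde\omega_t$ degenerate a priori as one approaches $X_0$, and one must verify that the cutoff functions $\alpha_i$ and the factor $(\log|t|^2)^2$ in \eqref{e2.3} have been chosen so that the singular contributions cancel; this is precisely the content of the cited estimates in \cite{ruan1,ruan2} and is the step I would treat most carefully, either by quoting those papers in the form needed or by reproducing the key local model computation (a neighborhood isomorphic to an open set in $\mathcal Y_\sigma$ with $\pi=\mathcal Z^{u_\sigma}$) in Section 3.2 and then citing it here. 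Once the uniformity of the background geometry is in hand, the proposition is immediate from \cite{aubin,Yau1}.
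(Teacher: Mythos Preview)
Your proposal is correct and matches the paper's approach exactly: the paper does not give a detailed proof of this proposition but simply states that the curvature of $\tilde\omega_t$ and the Ricci potential $\log(\chi_t V_t/\tilde\omega_t^n)$ are bounded independently of $t$ (quoting \cite{ruan1,ruan2} or the local computations of Section~3.2), and then invokes the standard $C^0$ and $C^2$ estimates of \cite{aubin,Yau1}. Your sketch is a faithful and somewhat more explicit unpacking of precisely that argument, including the correct identification of the main issue as the $t$-uniformity of the background geometry rather than the elliptic estimates themselves.
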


Once Proposition \ref{prop} is obtained, \cite{Tian,leung,ruan1,ruan2}  prove the convergence of $\omega_{t}$ to a complete K\"{a}hler-Einstein metric $\omega_{0}$ on the regular locus $X_{0,reg}$ in the Cheeger-Gromov sense, i.e. for any smooth family of embeddings $F_{t}: X_{0,reg} \rightarrow X_{t}$, $F_{t}^{*} \omega_{t} $ converges to $\omega_{0}$ in the locally $C^{\infty}$-sense when $t\rightarrow 0$.  When $\sharp I=1$, $ \tilde{\omega}_{0,I}$ is uniformly  equivalent to the K\"{a}hler-Einstein metric  $\omega_{0}$ on $X_{0,I}^{o}\subset X_{0,reg}$.

    \subsection{Proof of Theorem  \ref{main}}
    Now we study the local collapsing behaviour of    K\"{a}hler-Einstein metrics $\omega_{t}$.

For a point $p\in X_{0,I}$, let  $U\subset \mathcal{X}$ be a neighborhood of $p$   isomorphic to an open subset of a toric variety  $ \mathcal{Y}_{\sigma}$, denoted still by $U$, such that $U\cap  X_{0,I'}$ is empty  for any $I'   \nsubseteq   I=\{1,\cdots, s+1\}$.  Since $\mathcal{X}$ is $\mathbb{Q}$-factorial, so is $ \mathcal{Y}_{\sigma}$, and $ \mathcal{Y}_{\sigma}$ has only orbifold singularities, which is equivalent to the  rational    cone $\sigma$ being simplicial (cf. \cite{Fulton}).

      If $v_{0}, \cdots, v_{s}\in N$ are primitive vectors  belonging to $1$-dimensional faces  and   generating  $\sigma$ in $N_{\mathbb{R}}$, we denote $N_{\sigma}'={\rm Span}_{\mathbb{Z}}\{v_{0}, \cdots, v_{s}\}$ which is a sublattice of $N_{\sigma}=\mathbb{Z}\cdot (\sigma\cap N)$, and $M(\sigma)=\sigma^{\bot} \cap M\cong \mathbb{Z}^{n-s}$. Then $M\cong M_{\sigma}\oplus M(\sigma)$ where $M_{\sigma}=\hom_{\mathbb{Z}}(N_{\sigma}, \mathbb{Z})\cong M/M(\sigma)$, and  $M_{\sigma}$ is a sublattice of  $M_{\sigma}'=\hom_{\mathbb{Z}}  (N_{\sigma}',\mathbb{Z})={\rm Span}_{\mathbb{Z}}(v_{0}^{*}, \cdots, v_{s}^{*})$, where $v_{j}^{*}$ is the dual vector of $v_{j}$.  Note that the restriction of $\pi$ on $U$   is given by a monomial $\mathcal{Z}^{ u_{\sigma}}$, where
       $u_{\sigma} \in \check{\sigma} \cap M_{\sigma}$ satisfies   $\langle u_{\sigma}, v_{j}\rangle =1$ for $j=0, \cdots, s$, i.e.  $u_{\sigma}=\sum\limits_{j=0}\limits^{s}v_{j}^{*}$.

  If $G= N_{\sigma}/ N_{\sigma}'$, and $\mathcal{Y}'_{\sigma}={\rm Spec}(\mathbb{C}[\check{\sigma}\cap M'_{\sigma}]) \cong \mathbb{C}^{s+1}$, then the finite group  $G$ acts on $\mathcal{Y}'_{\sigma}$ by $v \cdot \mathcal{Z}^{u}=\exp (2\pi \sqrt{-1}\langle v, u\rangle)\cdot \mathcal{Z}^{u}$ for any $v\in N_{\sigma}$ and $u\in M_{\sigma}'$, and $\mathcal{Y}'_{\sigma}/G \times (\mathbb{C}^{*})^{n-s}\cong \mathcal{Y}_{\sigma}$. We denote  $q_{\sigma}: \mathcal{Y}'_{\sigma} \times (\mathbb{C}^{*})^{n-s}\rightarrow  \mathcal{Y}_{\sigma}$ the quotient map of the $G$-action.
    Let $z_{j}=\mathcal{Z}^{v_{j}^{*}}$, $j=0, \cdots, s$, be   coordinates on $\mathcal{Y}'_{\sigma}$, and $z_{s+1}, \cdots, z_{n}$ be  coordinates on $(\mathbb{C}^{*})^{n-s}$. The restriction  $q_{\sigma}: T_{N_{\sigma}'} \times (\mathbb{C}^{*})^{n-s}\rightarrow  T_{N}$ is a finite covering map, where  $T_{N}=N\otimes_{\mathbb{Z}}\mathbb{C}^{*}$ and $T_{N_{\sigma}'}=N_{\sigma}'\otimes_{\mathbb{Z}}\mathbb{C}^{*}$.

If we denote
    $Y_{\sigma,t}={\rm div}(\mathcal{Z}^{u_{\sigma}}-t) $, $t\in\mathbb{C}$, then  $Y_{\sigma,t}\cap U=X_{t}\cap U$,  and  $Y_{\sigma,0}=\sum\limits_{j=0}^{s}D_{j}$ where $D_{j}$  is a primitive toric Weil divisor of $Y_{\sigma}$. The restriction  $q_{\sigma}: q_{\sigma}^{-1}(Y_{\sigma,t})\rightarrow Y_{\sigma,t}$ is a finite covering map as  $X_{t}\cap U \subset T_{N}$ when $t\neq0$, and
    $q_{\sigma}^{-1}(Y_{\sigma,t})$
    is given by the equation
    $z_{0} \cdot\cdots z_{s}=t$ in $\mathcal{Y}'_{\sigma} \times (\mathbb{C}^{*})^{n-s}$. We can regard $z_{1}, \cdots, z_{n}$ as coordinates of $q_{\sigma}^{-1}(Y_{\sigma,t})$ for any $t\neq 0$. We assume that  $U\subset \mathcal{Y}_{\sigma}$  satisfies  $q_{\sigma}^{-1}(U)=\{(z_{0}, \cdots,z_{s})\in \mathcal{Y}'_{\sigma} ||z_{j}|<\epsilon, 0\leq j \leq s\}\times (U\cap X_{0,I})$ for an  $\epsilon<1$ without loss  of generality.

    Let $ x_{0}, \cdots, x_{s}$ be coordinates on $N'_{\sigma,\mathbb{R}}=N'_{\sigma}\otimes_{\mathbb{Z}}\mathbb{R} \cong \mathbb{R}^{s+1}$ respecting to the basis $v_{0}, \cdots, v_{s}$. Note that the interior of the $s$-dimensional  cell $\overline{B}_{I} \in \mathcal{B}$  associated  to $X_{0,I}$  is given by
     \begin{equation}\label{e3.01}\begin{split} B_{I}& =\{v\in {\rm int} (\sigma)  | \langle v, u_{\sigma}\rangle =1\}\\ & =\{(x_{0}, \cdots, x_{s})\in \mathbb{R}^{s+1} | \sum_{j=0}^{s}x_{j} =1, x_{j}>0, j=0, \cdots, s\}\\ &=\{(x_{1}, \cdots, x_{s})\in \mathbb{R}^{s} |   \sum_{j=1}^{s}x_{j} <1,  x_{j}>0, j=1, \cdots, s\}. \end{split} \end{equation} Here  we regard $x_{1}, \cdots, x_{s}$ as coordinates on  $B_{I}$.

 For any $t\in\Delta^{*}$,  we define the   covering map  \begin{equation}\label{e3.1} P_{t}: \mathbb{C}^{s}\times (\mathbb{C}^{*})^{n-s} \rightarrow q_{\sigma}^{-1}( Y_{\sigma,t})  \  \    \end{equation} by letting $z_{j}=e^{(\log |t| )w_{j}}$   and $x_{j}= {\rm Re} (w_{j}) $,   $j=1, \cdots, s$,  i.e.  $$P_{t}(w_{1}, \cdots, w_{s}, z_{s+1}, \cdots, z_{n})=(e^{(\log |t| )w_{1}}, \cdots,e^{(\log |t| ) w_{s}}, z_{s+1}, \cdots, z_{n}).$$  The fundamental domains  of $P_{t}$ are \begin{equation}\label{e3.010} \mathfrak{D}_{t,\nu}=\{(w_{1}, \cdots, w_{s})\in \mathbb{C}^{s}| \frac{2\pi \nu}{\log |t| } \leq {\rm Im}(w_{j}) \leq  \frac{2\pi (\nu+1)}{\log |t| }\}\times (\mathbb{C}^{*})^{n-s}\end{equation}  for $\nu \in\mathbb{Z}$, and naturally      $ (\mathbb{C}^{s}/\sqrt{-1}\frac{2 \pi \mathbb{Z}^{s}}{\log |t|} )\times (\mathbb{C}^{*})^{n-s} $ is biholomorphic to $ q_{\sigma}^{-1}( Y_{\sigma,t}) $ by further  setting $ z_{0}=t z_{1}^{-1}\cdots z_{s}^{-1}= t\exp (-\sum\limits_{j=1}^{s}(\log |t| )w_{j}).$

  Note that if $|z_{j}|<\epsilon$, $j=0, \cdots, s$,  then $x_{j}> \frac{\log \epsilon}{\log |t|}$ for $j=1, \cdots, s$, and $\log |t|(1- \sum\limits_{j=1}^{s}x_{j}) = \log |z_{0}|<\log \epsilon, $ which implies $$ P^{-1}_{t}(q_{\sigma}^{-1}( Y_{\sigma,t}\cap U))= B_{t}\times\sqrt{-1}\mathbb{R}^{s} \times (\mathbb{C}^{*})^{n-s}, $$
   where $$ B_{t}=\{ (x_{1}, \cdots, x_{s})\in \mathbb{R}^{s} | x_{j}> \frac{\log \epsilon}{\log |t|}, j=1, \cdots, s,  1- \sum\limits_{j=1}^{s}x_{j} > \frac{\log \epsilon}{\log |t|}\}\subset B_{I}.$$  Hence $$q_{\sigma}^{-1}( Y_{\sigma,t}\cap U)\subset B_{I}\times\sqrt{-1}(\mathbb{R}^{s}/\frac{2 \pi \mathbb{Z}^{s}}{\log |t|} ) \times (\mathbb{C}^{*})^{n-s}\subset q_{\sigma}^{-1}( Y_{\sigma,t}).   $$

   \begin{lemma}\label{le1} Let $K\subset B_{I}$ be a compact subset such that $K \subset B_{t} $ for $|t|\ll 1$.
 On  $K\times \sqrt{ -1}\mathbb{R}^{s} \times (U\cap X_{0,I}) \subset (q_{\sigma}\circ  P_{t})^{-1}(U\cap Y_{\sigma, t}) $,  when $t\rightarrow 0$,
  \begin{itemize}
  \item[i)]    $$ P_{t}^{*}q_{\sigma}^{*} \chi_{t} V_{t}\rightarrow V_{0}'= \frac{1}{4(1-\sum\limits_{j=1}^{s}x_{j})^{2} }\prod_{j=1}^{s}\frac{dw_{j}\wedge d\bar{w}_{j}}{4 x_{j}^{2} }\wedge V_{I},  $$in the $C^{\infty}$-sense, where  $ V_{I}$ is a smooth volume form on $U\cap X_{0,I}$.
    \item[ii)]
   $$P_{t}^{*}q_{\sigma}^{*} \tilde{\omega}_{t}\rightarrow  \omega^{o}_{U,I}+\frac{\sqrt{-1}}{2}( \sum_{j=1}^{s}  \frac{ dw_{j}\wedge  d\bar{w}_{j}}{x_{j}^{2}}+ \frac{\sum\limits_{ij=1}^{s}   dw_{i}\wedge  d\bar{w}_{j}}{(1-\sum\limits_{j=1}^{s}x_{j})^{2} } )= \tilde{\omega}^{o} $$in the $C^{\infty}$-sense, where $\omega^{o}_{U,I}$ is the pull-back of  the complete   K\"{a}hler metric $\tilde{\omega}_{0,I} $  on $U\cap X_{0,I}$.
   \end{itemize}
 \end{lemma}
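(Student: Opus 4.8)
The strategy is a direct local computation: pull everything back along $q_\sigma\circ P_t$ to the universal cover $B_t\times\sqrt{-1}\mathbb{R}^s\times (U\cap X_{0,I})$, express $\chi_t V_t$ and $\tilde\omega_t$ in the coordinates $(w_1,\dots,w_s,z_{s+1},\dots,z_n)$, and track the $t\to 0$ asymptotics term by term. First I would treat (i). Recall that $V=(\sum_k|\Psi_k|^2)^{1/m}$ is a fixed smooth volume form on $\mathcal X$, so $q_\sigma^*V$ is a fixed smooth volume form on $\mathcal Y'_\sigma\times(\mathbb C^*)^{n-s}$ (the $G$-action being free in codimension one, or simply pulling back the local expression), and $V_t=V\otimes(dt\wedge d\bar t)^{-1}$. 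On $q_\sigma^{-1}(Y_{\sigma,t})$ one uses the coordinates $z_1,\dots,z_n$ with $z_0=t z_1^{-1}\cdots z_s^{-1}$; differentiating the defining relation $z_0\cdots z_s=t$ gives $dt=z_0\cdots z_s(\sum_{j=0}^s dz_j/z_j)$ along the fiber, and one may take $dz_1/z_1,\dots,dz_s/z_s,dz_{s+1},\dots,dz_n$ as a frame, which turns $V_t$ into a smooth expression. Now substitute $z_j=e^{(\log|t|)w_j}$, so that $dz_j/z_j=(\log|t|)\,dw_j$ and $|z_j|\to 0$ uniformly on $K$ (since $x_j>\log\epsilon/\log|t|$ pushes $x_j\to 0^+$ only near the boundary, but on the fixed compact $K$ the $x_j$ stay bounded away from $0$ and from the hyperplane $\sum x_j=1$, so $z_j\to 0$ for $j=0,\dots,s$). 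Hence $q_\sigma^*V|_{z=0}$ restricted to $U\cap X_{0,I}$ is a smooth volume form $V_I$, the prefactors $(\log|t|)^{2s}$ from the $s$ copies of $dz_j\wedge d\bar z_j/|z_j|^2$ are exactly cancelled by the $(\log|t|^2)^2=4(\log|t|)^2$ in $\chi_t$ together with the two factors $(\log|t|)^2$ coming from $z_0$ — here one must bookkeep that $z_0\cdots z_s=t$ contributes one more $dw$-relation — and the $\alpha_i^{-2}$ factors in $\chi_t$ converge: for $i\in I$, $\alpha_i=\tfrac1\mu\log\|s_i\|_i^2$ behaves like $\log|z_{i-1}|^2=2(\log|t|)x_{i-1}$ up to a bounded smooth multiplicative error, giving the $4x_j^2$ and $4(1-\sum x_j)^2$ denominators, while for $i\notin I$ the $\alpha_i$ are bounded smooth functions on $U$ and get absorbed into $V_I$.

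For (ii) the computation is parallel. By \eqref{e2.4}, $\tilde\omega_t=\omega^o_t+2\sum_{i=1}^l\big(\tfrac{\mathrm{Ric}(\|\cdot\|_i)}{\alpha_i}+\sqrt{-1}\tfrac{\partial\alpha_i\wedge\bar\partial\alpha_i}{\alpha_i^2}\big)$. The terms $i\notin I$ together with $\omega^o_t$ pull back to (a perturbation of) $\tilde\omega_{0,I}$ on $U\cap X_{0,I}$, by definition \eqref{e2.5++} and the smooth convergence $z_j\to 0$; this produces $\omega^o_{U,I}$. For $i\in I$, write $i-1=j\in\{0,\dots,s\}$; then $\alpha_i=2(\log|t|)\,\mathrm{Re}(w_j)+(\text{bounded smooth})=2(\log|t|)x_j+O(1)$, hence $\partial\alpha_i=(\log|t|)\partial w_j+\cdots$ (where for $j=0$ one uses $w_0:=-\sum_{k=1}^s w_k$ and $x_0=1-\sum x_k$), so $\sqrt{-1}\,\partial\alpha_i\wedge\bar\partial\alpha_i/\alpha_i^2\to \sqrt{-1}\,\partial w_j\wedge\bar\partial w_j/(4x_j^2)=\tfrac14\sqrt{-1}\,dw_j\wedge d\bar w_j/x_j^2$, while $\mathrm{Ric}(\|\cdot\|_i)/\alpha_i\sim (\text{bounded})/(\log|t|)\to 0$. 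Summing $j=1,\dots,s$ gives $\sum_j \sqrt{-1}\,dw_j\wedge d\bar w_j/(2x_j^2)$, and the $j=0$ term, after substituting $w_0=-\sum w_k$, $dw_0=-\sum dw_k$, gives the cross term $\sqrt{-1}\sum_{ij}dw_i\wedge d\bar w_j/\big(2(1-\sum x_j)^2\big)$; the overall factor $2$ in front of the sum in \eqref{e2.4} together with the $\tfrac14$ yields the stated $\tfrac{\sqrt{-1}}{2}$. All error terms are $O(1/\log|t|)$ or come from $z_j\to 0$, so the convergence is $C^\infty$ on compact subsets, uniformly in the $\sqrt{-1}\mathbb R^s$-directions by translation invariance.

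The main obstacle is the careful bookkeeping of the powers of $\log|t|$ and the precise asymptotics of the Hermitian-metric functions $\alpha_i$ near $X_0$: one must verify that $\|s_i\|_i^2$ is comparable to $|z_{i-1}|^{2\mu}\cdot(\text{smooth nonvanishing})$ in the toric chart $U$ (this uses $\mathrm{div}(s_i)=\mu X_{0,i}$ and that $X_{0,i}\cap U=D_{i-1}=\{z_{i-1}=0\}$, so $s_i=z_{i-1}^\mu\cdot(\text{unit})$ in a local trivialization), and that the induced error terms genuinely vanish in $C^k$ on the fixed compact set $K$ — this is where the hypothesis $K\subset B_t$ for $|t|\ll 1$ is used, to keep all $x_j$ bounded away from $0$ and from $1-\sum x_j=0$, so that division by $\alpha_i$ and the convergence $z_j=e^{(\log|t|)x_j+\sqrt{-1}(\cdots)}\to 0$ are uniform. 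A secondary technical point is that $\omega^o_t=\sqrt{-1}\partial\bar\partial\log V_t$ must be shown to converge, which follows from the smoothness of $\log V$ away from the indeterminacy and the fact that, along $X_t$, the relevant $\partial\bar\partial$ is computed in the fixed frame $dz_j/z_j,dz_k$ and hence is uniformly $C^\infty$-bounded with a limit equal to the corresponding expression on $U\cap X_{0,I}$.
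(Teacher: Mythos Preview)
Your proposal is correct and follows essentially the same approach as the paper: pull everything back via $q_\sigma\circ P_t$, write $q_\sigma^*V$, $q_\sigma^*\alpha_i$, and $q_\sigma^*\omega^o$ explicitly in the chart $(z_0,\dots,z_n)$, pass to $w$-coordinates, and use that on $K$ one has $z_j\to 0$ in $C^\infty$ (so all smooth data freezes to its value on $X_{0,I}$) together with $\alpha_i=\log(\rho_{i-1}|z_{i-1}|^2)=2(\log|t|)x_{i-1}+O(1)$ for $i\in I$ to obtain the stated limits. The only slip is cosmetic: you set $w_0:=-\sum_{k} w_k$ but then use $x_0=1-\sum x_k$; the paper takes $w_0=1+\sqrt{-1}\,\arg(t)/\log|t|-\sum_k w_k$ so that $z_0=e^{(\log|t|)w_0}$, and since only $dw_0=-\sum_k dw_k$ enters the limiting forms this does not affect the computation.
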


 \begin{proof} Let  $ w_{0}=1+\sqrt{-1}\frac{\arg( t)}{\log |t|}-w_{1}- \cdots -w_{s}$  on $(q_{\sigma}\circ  P_{t})^{-1}( Y_{\sigma, t})$,  and $x_{0}=1-x_{1}- \cdots -x_{s}$ on $B_{I}$. We have $z_{0}=e^{\log|t|w_{0}}$ and $dw_{0}=-dw_{1}- \cdots -dw_{s}$ on $(q_{\sigma}\circ  P_{t})^{-1}( Y_{\sigma, t})$.

  Now, we claim that for a smooth function $\lambda$ on $\mathcal{Y}'_{\sigma}\times (\mathbb{C}^{*})^{n-s}$,   $\lambda \circ P_{t}  \rightarrow \lambda'=\lambda(0,z_{s+1},\cdots,z_{n})$ and $dz_{j}=\frac{\partial z_{j}}{\partial w_{j}}dw_{j}\rightarrow 0$, $j=0, \cdots, s$,   in the $C^{\infty}$-sense on any compact subset of  $ (q_{\sigma}\circ  P_{t})^{-1}( U \cap Y_{\sigma, t})$, when $t\rightarrow 0$. Since $$ |\frac{\partial^{k} z_{j}}{\partial w_{j}^{k}}|=|(\log|t|)^{k} e^{ (\log|t|) x_{j}}| \leq |(\log|t|)^{k} |e^{ \varepsilon_{j} \log|t| } \rightarrow 0, \  \  \ |\frac{\partial z_{0}}{\partial w_{j}}|=|\frac{\partial z_{0}}{\partial w_{0}}|$$ for a  $\varepsilon_{j}>0$, $0\leq j \leq s$, the claim follows by $|\frac{\partial^{k} \lambda}{\partial z_{i_{1}}^{k_{1}} \cdots \partial z_{i_{s'}}^{k_{s'}}}|\leq C$ for some constants $C>0$.

 Since $\mathcal{Y}_{\sigma}$ has only Gorenstein orbifold singularities, for the generator $\Omega_{\sigma}\in \mathcal{O}(\mathcal{K}_{\mathcal{Y}_{\sigma}})$, $q_{\sigma}^{*}\Omega_{\sigma}$ is a $G$-invariant  no-where vanishing holomorphic $(n+1,0)$-form on $\mathcal{Y}'_{\sigma}\times (\mathbb{C}^{*})^{n-s}$, and thus $$q_{\sigma}^{*}V= \eta \prod_{j=0}^{n}dz_{j}\wedge d\bar{z}_{j},$$ where $\eta>0$ is a smooth function on $q_{\sigma}^{-1}(U)$.  We obtain $$ q_{\sigma}^{*} V_{t}=\eta \prod_{j=1}^{s}\frac{dz_{j}\wedge d\bar{z}_{j}}{|z_{j}|^{2}}\wedge \prod_{i=s+1}^{n}dz_{i}\wedge d\bar{z}_{i}$$ on $q_{\sigma}^{-1}(X_{t} \cap U)$.

 Without loss of generality, we assume that $I=\{1, \cdots, s+1\}$.
Under a   local  trivialization of $\mathcal{O}(\mu X_{0,i})$, $i\in I$,  on $U$, we have that  $q_{\sigma}^{*}s_{i}=z_{j}^{\mu}$, where $j=i-1$,   and   the Hermitian metric  $\|\cdot \|_{i}$ is a given by restricting  a smooth function $\rho_{j}'$ on an open subset $\mathbb{C}^{\nu}$ for  a local  embedding $U\hookrightarrow \mathbb{C}^{\nu}$.   Thus
    $q_{\sigma}^{*} \alpha_{j+1}=\log \rho_{j} | z_{j} |^{2}$ for  $j=0, \cdots, s$,   where $\rho_{j}=\rho_{j}'\circ q_{\sigma}>0$ are smooth function on  $q_{\sigma}^{-1}(U)$, and $q_{\sigma}^{*} \alpha_{i}<0$,  $i=s+2, \cdots, l$,  are also smooth  functions.  By (\ref{e2.3}),  $$ q_{\sigma}^{*} \chi_{t} V_{t}=\eta''\frac{(\log|t|^{2})^{2}}{(\log (\rho_{0} | z_{0} |^{2}))^{2}}\prod_{j=1}^{s}\frac{dz_{j}\wedge d\bar{z}_{j}}{|z_{j}|^{2}(\log (\rho_{j} | z_{j} |^{2}))^{2}}\wedge \prod_{i=s+1}^{n}dz_{i}\wedge d\bar{z}_{i},$$ where $\eta''>0$ is a smooth function on $ q_{\sigma}^{-1}(U)$,  and  $$ P_{t}^{*}q_{\sigma}^{*} \chi_{t} V_{t}=\frac{\eta'' \circ P_{t}}{(\frac{\log \rho_{0}}{\log|t|^{2}} +2x_{0} )^{2}}\prod_{j=1}^{s}\frac{dw_{j}\wedge d\bar{w}_{j}}{(\frac{\log \rho_{j}}{\log|t|^{2}} +2x_{j} )^{2}}\wedge\prod_{i=s+1}^{n}dz_{i}\wedge d\bar{z}_{i}.$$  By taking $t\rightarrow 0$, we obtain the convergence of volume forms.

   We have
 $q_{\sigma}^{*}\omega^{o}$ is a smooth $(1,1)$-form on $q_{\sigma}^{-1}(U)$, and $$P_{t}^{*} q_{\sigma}^{*}\omega^{o}_{t}=\sqrt{-1}P_{t}^{*} q_{\sigma}^{*}\partial\overline{\partial} \log V_{t}= \sqrt{-1}\partial\overline{\partial} \log \eta \rightarrow \sqrt{-1}\partial\overline{\partial} \log \eta',$$  in the $C^{\infty}$-sense, when $t\rightarrow 0$,  where $\eta'=\eta(0,z_{s+1}, \cdots, z_{n})>0$. Note that  $\sqrt{-1}\partial\overline{\partial} \log \eta'$ is the pull-back of $\omega^{o}|_{X_{0,I}\cap U} $.     Since $q_{\sigma}^{*} \frac{{\rm Ric}(\|\cdot\|_{i})}{\alpha_{i}}$ and $q_{\sigma}^{*} \frac{\partial \alpha_{i} \wedge \overline{\partial}\alpha_{i}}{\alpha_{i}^{2}}$, $i=s+2, \cdots,l$, are also smooth $(1,1)$-forms on $q_{\sigma}^{-1}(U)$, we have $$ P_{t}^{*}q_{\sigma}^{*} (\frac{{\rm Ric}(\|\cdot\|_{i})}{\alpha_{i}}+\sqrt{-1}\frac{\partial \alpha_{i} \wedge \overline{\partial}\alpha_{i}}{\alpha_{i}^{2}}) \rightarrow \beta_{i},$$ in the $C^{\infty}$-sense, where $\beta_{i}$ is the pull-back of the  smooth K\"{a}hler form $(\frac{{\rm Ric}(\|\cdot\|_{i})}{\alpha_{i}}+\sqrt{-1}\frac{\partial \alpha_{i} \wedge \overline{\partial}\alpha_{i}}{\alpha_{i}^{2}})|_{U\cap X_{0,I}}$ on $U\cap X_{0,I}$.  Thus $$\omega^{o}_{U,I}=\sqrt{-1}\partial\overline{\partial} \log \eta'+2\sum\limits_{i=s+2}^{l}\beta_{i}$$ is the pull-back of the restriction of   $\tilde{\omega}_{0,I} $  on $U\cap X_{0,I}$ by (\ref{e2.5++}).

 On $K$, $(\log |t|)x_{j} \rightarrow - \infty$, $j=0, \cdots, s$,  and thus,
  $$P_{t}^{*} q_{\sigma}^{*} \frac{{\rm Ric}(\|\cdot\|_{j+1}) }{\alpha_{j+1}}=\frac{\sqrt{-1} \partial\overline{\partial} \log \rho_{i}}{ \log \rho_{i}+2 (\log |t|)x_{i}}\rightarrow 0,  $$ in the $C^{\infty}$-sense. Furthermore,
$$P_{t}^{*} q_{\sigma}^{*} \frac{\partial \alpha_{j+1} \wedge \overline{\partial}\alpha_{j+1}}{\alpha_{j+1}^{2}}=\frac{(\partial \log \rho_{j}+ \log|t| dw_{j})\wedge  (\overline{\partial} \log \rho_{j}+ \log|t| d\bar{w}_{j})}{(\log \rho_{j}+2(\log|t|)x_{j})^{2}}\rightarrow \frac{ dw_{j}\wedge  d\bar{w}_{j}}{4x_{j}^{2}},$$ in the $C^{\infty}$-sense, when $t\rightarrow 0$.
Thus we obtain the conclusion by (\ref{e2.4}), and $$\frac{ dw_{0}\wedge  d\bar{w}_{0}}{4x_{0}^{2}}=  \frac{\sum\limits_{ij=1}^{s}   dw_{i}\wedge  d\bar{w}_{j}}{4(1-\sum\limits_{j=1}^{s}x_{j})^{2} } .  $$
 \end{proof}

  \begin{lemma}\label{le2} Let $\varphi_{t}$ be the unique solution of (\ref{mae}),    and  $  \omega_{t}=\tilde{\omega}_{t}+\sqrt{-1}\partial\overline{\partial}\varphi_{t}$.
   For any sequence $t_{k} \rightarrow 0$, a subsequence of  $\varphi_{t_{k}}\circ q_{\sigma} \circ P_{t_{k}}$ converges to  $\varphi_{0}$ in the $C^{\infty}$-sense on $K\times \sqrt{ -1}\mathbb{R}^{s} \times (U\cap X_{0,I})$, where $\varphi_{0}$ is  a  smooth function on $B_{I}\times \sqrt{ -1}\mathbb{R}^{s} \times (U\cap X_{0,I})$  satisfying  the complex  Monge-Amp\`{e}re equation  \begin{equation}\label{mae2} (\tilde{\omega}^{o}+\sqrt{-1}\partial\overline{\partial}\varphi_{0})^{n}=e^{\varphi_{0}}V_{0}', \  \    \end{equation} $${\rm  with}  \  \ |\varphi_{0}|\leq C_{3}, \  \  {\rm and} \  \ C_{4}^{-1}\tilde{\omega}^{o}\leq \tilde{\omega}^{o}+\sqrt{-1}\partial\overline{\partial}\varphi_{0}  \leq C_{4}\tilde{\omega}^{o}.   $$  Furthermore, $\varphi_{0}$ is independent of ${\rm Im}(w_{j})$, $j=1, \cdots, s$, i.e.   $$\varphi_{0}=\varphi_{0}(x_{1}, \cdots x_{s},z_{s+1},\cdots,z_{n}).$$
  \end{lemma}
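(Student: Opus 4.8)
The plan is to transplant everything to the semi-flat model of Lemma \ref{le1}, extract uniform interior estimates for the Monge--Amp\`ere equation from Proposition \ref{prop}, pass to a subsequential limit, and then exploit the shrinking of the fibre periods to kill the dependence on the imaginary coordinates. First I would pull back (\ref{mae}) by $q_{\sigma}\circ P_{t}$. Since $q_{\sigma}$ and $P_{t}$ restrict, for $t\neq 0$, to local biholomorphisms onto their images, the function $\psi_{t}:=\varphi_{t}\circ q_{\sigma}\circ P_{t}$ solves
$$\bigl(P_{t}^{*}q_{\sigma}^{*}\tilde{\omega}_{t}+\sqrt{-1}\partial\overline{\partial}\psi_{t}\bigr)^{n}=e^{\psi_{t}}\,P_{t}^{*}q_{\sigma}^{*}\chi_{t}V_{t}$$
on $(q_{\sigma}\circ P_{t})^{-1}(U\cap Y_{\sigma,t})$, and $P_{t}^{*}q_{\sigma}^{*}\omega_{t}=P_{t}^{*}q_{\sigma}^{*}\tilde{\omega}_{t}+\sqrt{-1}\partial\overline{\partial}\psi_{t}$ is a genuine K\"ahler form there. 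Proposition \ref{prop} pulls back to give $|\psi_{t}|\leq C_{1}$ and $C_{2}^{-1}P_{t}^{*}q_{\sigma}^{*}\tilde{\omega}_{t}\leq P_{t}^{*}q_{\sigma}^{*}\omega_{t}\leq C_{2}P_{t}^{*}q_{\sigma}^{*}\tilde{\omega}_{t}$ with $C_{1},C_{2}$ independent of $t$, while Lemma \ref{le1} supplies $P_{t}^{*}q_{\sigma}^{*}\tilde{\omega}_{t}\to\tilde{\omega}^{o}$ and $P_{t}^{*}q_{\sigma}^{*}\chi_{t}V_{t}\to V_{0}'$ in $C^{\infty}_{loc}$ on $B_{I}\times\sqrt{-1}\mathbb{R}^{s}\times(U\cap X_{0,I})$.

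Next I would fix a compact $K'\subset B_{I}\times\sqrt{-1}\mathbb{R}^{s}\times(U\cap X_{0,I})$; for $|t|$ small it lies in the domain above, and by Lemma \ref{le1} the form $P_{t}^{*}q_{\sigma}^{*}\tilde{\omega}_{t}$ is on $K'$ uniformly equivalent to the fixed metric $\tilde{\omega}^{o}$ with uniformly bounded covariant derivatives (note $\tilde{\omega}^{o}$ has bounded geometry on such a set, being invariant under the imaginary translations and varying smoothly with the $x_{j}$ over a compact range). Hence the equation for $\psi_{t}$ is uniformly elliptic on $K'$ with $C^{\infty}$-bounded data, so the standard interior a priori estimates for complex Monge--Amp\`ere equations apply: the uniform $C^{0}$ bound together with the uniform ellipticity gives a uniform interior $C^{2,\alpha}$ bound by Evans--Krylov, and differentiating the equation and iterating Schauder estimates yields uniform interior $C^{k,\alpha}$ bounds for every $k$, independent of $t$, on slightly smaller compact sets. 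By Arzel\`a--Ascoli and a diagonal argument over an exhaustion of $B_{I}\times\sqrt{-1}\mathbb{R}^{s}\times(U\cap X_{0,I})$ by compact sets, any sequence $t_{k}\to 0$ has a subsequence along which $\psi_{t_{k}}$ converges in $C^{\infty}_{loc}$ to a smooth function $\varphi_{0}$; passing to the limit in the equation and using Lemma \ref{le1} gives (\ref{mae2}), and the inequalities $|\varphi_{0}|\leq C_{1}$, $C_{2}^{-1}\tilde{\omega}^{o}\leq\tilde{\omega}^{o}+\sqrt{-1}\partial\overline{\partial}\varphi_{0}\leq C_{2}\tilde{\omega}^{o}$ pass to the limit as well, so one may take $C_{3}=C_{1}$, $C_{4}=C_{2}$.

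It then remains to see that $\varphi_{0}$ does not depend on $\mathrm{Im}(w_{1}),\dots,\mathrm{Im}(w_{s})$. The point is that $z_{j}=e^{(\log|t|)w_{j}}$, so $\psi_{t}$ is periodic in each $\mathrm{Im}(w_{j})$ with period $2\pi/|\log|t||$ --- this is exactly the identification $q_{\sigma}^{-1}(Y_{\sigma,t})\cong(\mathbb{C}^{s}/\sqrt{-1}\tfrac{2\pi\mathbb{Z}^{s}}{\log|t|})\times(\mathbb{C}^{*})^{n-s}$ via the fundamental domains $\mathfrak{D}_{t,\nu}$ --- and this period tends to $0$ as $t\to 0$. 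Combining this with the uniform interior $C^{1}$ bound: for any two points $p,p'$ of a fixed compact set differing only in the coordinates $\mathrm{Im}(w_{j})$, translating $p'$ by period-lattice vectors into the cube of side $2\pi/|\log|t||$ about $p$ and using periodicity gives $|\psi_{t}(p)-\psi_{t}(p')|\leq C|\log|t||^{-1}\to 0$. Letting $t_{k}\to 0$ along the convergent subsequence yields $\varphi_{0}(p)=\varphi_{0}(p')$, hence $\varphi_{0}=\varphi_{0}(x_{1},\dots,x_{s},z_{s+1},\dots,z_{n})$.

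I expect the main obstacle to be the \emph{uniformity in $t$} of the interior $C^{2,\alpha}$ (and higher) estimates: one has to check that pulling back by $q_{\sigma}\circ P_{t}$ genuinely converts the collapsing family $\{\omega_{t}\}$ into a family of Monge--Amp\`ere equations that is uniformly elliptic on a \emph{fixed} background of bounded geometry, which is precisely what Proposition \ref{prop} and Lemma \ref{le1} are designed to provide; once that is in hand, the compactness argument and the short periodicity observation producing the $\mathrm{Im}$-independence of the limit are routine.
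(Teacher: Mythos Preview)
Your proposal is correct and follows essentially the same route as the paper: pull back by $q_{\sigma}\circ P_{t}$, combine Proposition~\ref{prop} with Lemma~\ref{le1} to obtain uniform ellipticity on a fixed background, apply Evans--Krylov and Schauder for uniform interior $C^{k}$ estimates, extract a subsequential $C^{\infty}_{loc}$ limit satisfying (\ref{mae2}), and use the shrinking periodicity in $\mathrm{Im}(w_{j})$ to conclude independence of the limit on those variables. Your treatment of the last step via the uniform $C^{1}$ bound is a slightly more explicit rendering of what the paper compresses into ``by the smooth convergence,'' but the content is the same.
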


 \begin{proof} By Proposition \ref{prop},
 we have  that $$|\varphi_{t}|\leq C, \  \  {\rm  and }  \  \  C^{-1}P_{t}^{*}q_{\sigma}^{*} \tilde{\omega}_{t}\leq P_{t}^{*}q_{\sigma}^{*}(\tilde{\omega}_{t}+\sqrt{-1}\partial\overline{\partial}\varphi_{t})\leq CP_{t}^{*}q_{\sigma}^{*} \tilde{\omega}_{t}$$ for a constant $C>0$.
 We obtain the $C^{2,\alpha}$-estimates for $\varphi_{t}$, i.e. $\|\varphi_{t}\circ q_{\sigma}\circ P_{t}\|_{C^{2,\alpha}}\leq \bar{C}$,  by Lemma \ref{le1} and
 the
Evans-Krylov theory (cf.  \cite{GT, Siu}), and    the higher order estimates  $\|\varphi_{t}\circ q_{\sigma}\circ P_{t}\|_{C^{\nu}}\leq C(\nu)$ by  the standard
 Schauder
estimates on any compact subset $K' \subset K\times \sqrt{ -1}\mathbb{R}^{s} \times (U\cap X_{0,I})$.  Thus by passing to a subsequence of $t_{k}$,  $\varphi_{t_{k}}\circ q_{\sigma} \circ P_{t_{k}}$ converges to a smooth function   $\varphi_{0}$ in the locally  $C^{\infty}$-sense, and $\varphi_{0}$ satisfies the the complex  Monge-Amp\`{e}re equation (\ref{mae2}) by Lemma \ref{le1}.

 Since
 $ \varphi_{t}\circ q_{\sigma}\circ P_{t} $ is a periodic function with period $\sqrt{-1} \frac{2 \pi \mathbb{Z}^{s}}{\log |t|}$, i.e. $$\varphi_{t}\circ q_{\sigma}\circ P_{t} (\mathrm{w}, \mathrm{z})=\varphi_{t}\circ q_{\sigma}\circ P_{t}(\mathrm{w}+\sqrt{-1}\frac{2\pi \mathfrak{m}}{\log |t|},\mathrm{z}),$$ for any  $\mathfrak{m} \in \mathbb{Z}^{s}$,  where $\mathrm{w}=(w_{1},\cdots,w_{s})$ and $\mathrm{z}=(z_{s+1}, \cdots, z_{n})$, we obtain that     $\varphi_{0}$ is independent of ${\rm Im}(w_{j})$, $j=1, \cdots, s$, by the smooth convergence.
  \end{proof}

   Since $\frac{\partial^{2} \varphi_{0}}{\partial w_{i} \overline{\partial}w_{j}}= \frac{\partial^{2} \varphi_{0}}{4 \partial x_{i}\partial x_{j}}$, the corresponding  Riemannian metric of $\tilde{\omega}^{o}+\sqrt{-1}\partial\overline{\partial}\varphi_{0}$ is  \begin{equation}\label{e3.50} g_{0}= \sum_{ij=1}^{s} (\frac{\delta_{ij}}{x_{i}^{2}}+\frac{1}{(1-\sum\limits_{j=1}^{s}x_{j})^{2}}+\frac{\partial^{2} \varphi_{0}}{2 \partial x_{i}\partial x_{j}})(dx_{i}dx_{j}+d\theta_{i}d\theta_{j})+\mathcal{G}_{0},
  \end{equation} where $\theta_{j}={\rm Im} (w_{j})$, $j=1, \cdots, n$, and $\mathcal{G}_{0}$ denotes the remaining   terms that do  not involve any $d\theta_{i}d \theta_{j}$ and $dx_{i}d x_{j}$.

  Note that both  $\tilde{\omega}^{o}$ and $\varphi_{0}$ are invariant under the translation $w_{j}\mapsto w_{j}+\lambda_{j} \sqrt{-1}$, $j=1,\cdots,s$, for any $(\lambda_{1}, \cdots, \lambda_{s})\in \mathbb{R}^{s}$.  Hence for any $t\neq 0$,  $\tilde{\omega}^{o}+\sqrt{-1}\partial\overline{\partial}\varphi_{0}$ descents to  a K\"{a}hler  metric $\omega_{t}^{sf}$ on $Y_{\sigma,t} \cap U$, which satisfies   that  \begin{equation}\label{e3.5} P_{t}^{*}q_{\sigma}^{*} \omega_{t}^{sf} = \tilde{\omega}^{o}+\sqrt{-1}\partial\overline{\partial}\varphi_{0},  \  \  {\rm and} \  \   \|  \omega_{t_{k}}- \omega_{t_{k}}^{sf} \|_{C_{loc}^{\nu}(Y_{\sigma,t_{k}} \cap U, \omega_{t_{k}}^{sf})} \rightarrow 0,
 \end{equation} for any $\nu>0$,  when $t_{k}\rightarrow 0$ by Lemma \ref{le2}.

Define a fibration
$$ \tilde{f}_{t}: B_{I} \times \sqrt{-1}(\mathbb{R}^{s}/ (\frac{2\pi \mathbb{Z}^{s}}{\log |t|}))\times (\mathbb{C}^{*})^{n-s}\rightarrow   B_{I}\times (\mathbb{C}^{*})^{n-s}$$ by the projection. Note that $ \tilde{f}_{t}$ is $G$-equivariant, $ \tilde{f}_{t}$ induces a $T^{s}$-fibration  \begin{equation}\label{e3.01}f_{t}:U\cap Y_{\sigma, t} \rightarrow B_{t}\times (\mathbb{C}^{*})^{n-s}, \  \ {\rm  with}  \  \    \tilde{f}_{t}=f_{t} \circ q_{\sigma}. \end{equation}
 For a point  $(\mathrm{x}, \mathrm{z})\in  B_{t}\times (\mathbb{C}^{*})^{n-s}$, where $\mathrm{x}=(x_{1}, \cdots, x_{s})\in B_{t}$ and $\mathrm{z}=(z_{s+1}, \cdots, z_{n})\in (\mathbb{C}^{*})^{n-s}$,  the fiber $f_{t}^{-1}(\mathrm{x}, \mathrm{z})$ satisfies that $(q_{\sigma}\circ P_{t})^{-1} (f_{t}^{-1}(\mathrm{x}, \mathrm{z}))=\{(\mathrm{x}+\sqrt{-1}\theta, \mathrm{z})|\theta =(\theta_{1}, \cdots, \theta_{s})\in \mathbb{R}^{s} \}$.   Hence the restriction of the K\"{a}hler metric $\omega_{t}^{sf}$ on $f_{t}^{-1}(\mathrm{x}, \mathrm{z})$ is a flat Riemannian metric, i.e. $\omega_{t}^{sf}$ is a semi-flat metric,  and $$     {\rm diam}_{\omega_{t_{k}}}(f_{t_{k}}^{-1}(\mathrm{x}, \mathrm{z}))\sim  {\rm diam}_{\omega_{t_{k}}^{sf}}(f_{t_{k}}^{-1}(\mathrm{x}, \mathrm{z}))\leq \frac{2\pi s \sqrt{C_{\mathrm{x}, \mathrm{z}} }}{-\log |t|}  \rightarrow 0 ,$$ when $t \rightarrow 0$, by (\ref{e3.5}) and (\ref{e3.50}), where $$ C_{\mathrm{x}, \mathrm{z}}= \sum_{ij=1}^{s} \Big | \frac{\delta_{ij}}{x_{i}^{2}}+\frac{1}{(1-\sum\limits_{j=1}^{s}x_{j})^{2}}+\frac{\partial^{2} \varphi_{0}}{2 \partial x_{i}\partial x_{j}}(\mathrm{x}, \mathrm{z})\Big |.  $$

  We denote $W_{U}= B_{I}  \times (U\cap X_{0,I})$, and naturally  regard  $W_{U} \subset B_{I} \times \sqrt{-1}(\mathbb{R}^{s}/ (\frac{2\pi \mathbb{Z}^{s}}{\log |t|}))\times (U\cap X_{0,I})$ given by $\theta_{j}=0$, $j=1,\cdots, n$. We let   $g_{W_{U}}=g_{0}|_{W_{U}}$.    If  $p\in W_{U}$, and  $r>0$ such that the metric ball $B_{g_{W_{U}}}(p,r)\subset K''$ for a compact subset $K''\subset W_{U}$, then \begin{equation}\label{e3.501}(B_{\omega_{t_{k}}}(p_{t_{k}},r), \omega_{t_{k}}) \ \ {\rm and}   \  \  ( B_{\omega_{t_{k}}^{sf}}(p_{t_{k}},r),\omega_{t_{k}}^{sf}) \rightarrow (B_{g_{W_{U}}}(p,r), g_{W_{U}}) \end{equation} in the Gromov-Hausdorff sense by (\ref{e3.5}), when $t_{k} \rightarrow 0$,  for some  $p_{t} \in X_{t} \cap U$.  By  Gromov's precompactness theorem (cf.  \cite{Gromov}),  $(X_{t_{k}},  \omega_{t_{k}}, p_{t_{k}})$ converges to a complete metric space $(W, d_{W}, p_{\infty})$ of Hausdorff dimension $\varrho$  in the pointed Gromov-Hausdorff sense, and   there is a local isometric embedding $(B_{g_{W_{U}}}(p_{\infty},r), g_{W_{U}}) \hookrightarrow (W, d_{W})$, which implies   $\varrho= \dim_{\mathbb{R}} B_{I} \times X_{0,I}$.

 In summary, we have the following proposition.

  \begin{proposition}\label{prop22}
   There is  a semi-flat K\"{a}hler-Einstein metric $\omega_{t}^{sf}$ on $X_{t} \cap U$ respecting to $f_{t}$  such that $$ \|  \omega_{t_{k}}- \omega_{t_{k}}^{sf} \|_{C_{loc}^{\nu}(Y_{\sigma,t_{k}} \cap U,\omega_{t_{k}}^{sf})} \rightarrow 0,
 $$ for any $\nu>0$, and a sequence $t_{k}\rightarrow 0$. Furthermore, $(X_{t_{k}},  \omega_{t_{k}}, p_{t_{k}})$ converges to a complete metric space $(W, d_{W}, p_{\infty})$ in the pointed Gromov-Hausdorff sense by choosing  some base points   $p_{t} \in X_{t}$, and the Hausdorff dimension of $W$ equals to $\dim_{\mathbb{R}} B_{I}  + 2 \dim_{\mathbb{C}}  X_{0,I}$.
  \end{proposition}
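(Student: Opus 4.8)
The plan is to assemble the local analysis of Lemma \ref{le1} and Lemma \ref{le2} with the descent construction already described above. Fix a sequence $t_{k}\to0$. First I would invoke Lemma \ref{le2}: after passing to a subsequence the rescaled potentials $\varphi_{t_{k}}\circ q_{\sigma}\circ P_{t_{k}}$ converge in $C^{\infty}_{loc}$ to a function $\varphi_{0}$ on $B_{I}\times\sqrt{-1}\,\mathbb{R}^{s}\times(U\cap X_{0,I})$ that is independent of the imaginary coordinates $\theta_{j}=\operatorname{Im}(w_{j})$ and solves $(\tilde\omega^{o}+\ddbar\varphi_{0})^{n}=e^{\varphi_{0}}V_{0}'$, with $\tilde\omega^{o}+\ddbar\varphi_{0}$ uniformly equivalent to $\tilde\omega^{o}$ on compacta. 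Since $\tilde\omega^{o}=\ddbar\log V_{0}'$ (it is the $C^{\infty}_{loc}$-limit of $\tilde\omega_{t}=\ddbar\log\chi_{t}V_{t}$ from Lemma \ref{le1}), this Monge-Amp\`ere equation says exactly that $\tilde\omega^{o}+\ddbar\varphi_{0}$ has Ricci curvature $-1$. Because $\varphi_{0}$ and $\tilde\omega^{o}$ are invariant under the translations $w_{j}\mapsto w_{j}+\sqrt{-1}\lambda_{j}$ and are $G$-invariant (the $G$-invariance of $\varphi_{0}$ descending from that of each $\varphi_{t}\circ q_{\sigma}$), the metric $\tilde\omega^{o}+\ddbar\varphi_{0}$ descends through $P_{t}$ and then through $q_{\sigma}$ to a K\"ahler-Einstein metric $\omega_{t}^{sf}$ on $Y_{\sigma,t}\cap U=X_{t}\cap U$, for every $t\in\Delta^{*}$.

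Next I would establish the $C^{\nu}_{loc}$-closeness of $\omega_{t_{k}}$ to $\omega_{t_{k}}^{sf}$. Pulling back by the local covering isometry $q_{\sigma}\circ P_{t_{k}}$ gives $P_{t_{k}}^{*}q_{\sigma}^{*}\omega_{t_{k}}=P_{t_{k}}^{*}q_{\sigma}^{*}\tilde\omega_{t_{k}}+\ddbar(\varphi_{t_{k}}\circ q_{\sigma}\circ P_{t_{k}})$, which by Lemma \ref{le1} and Lemma \ref{le2} converges in $C^{\infty}_{loc}$ to $\tilde\omega^{o}+\ddbar\varphi_{0}=P_{t_{k}}^{*}q_{\sigma}^{*}\omega_{t_{k}}^{sf}$. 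Measuring the difference in the $\omega_{t_{k}}^{sf}$-norm, using the uniform equivalence of the limit metric with $\tilde\omega^{o}$ on compact sets and the fact that $q_{\sigma}\circ P_{t_{k}}$ is a local isometry, yields $\|\omega_{t_{k}}-\omega_{t_{k}}^{sf}\|_{C^{\nu}_{loc}(Y_{\sigma,t_{k}}\cap U,\,\omega_{t_{k}}^{sf})}\to0$ for every $\nu>0$; this is precisely (\ref{e3.5}).

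I would then read off the geometric consequences. A fiber $f_{t}^{-1}(\mathrm{x},\mathrm{z})$ lifts under $q_{\sigma}\circ P_{t}$ to a translate of $\sqrt{-1}\,\mathbb{R}^{s}$ in the $w$-coordinates, on which $\tilde\omega^{o}+\ddbar\varphi_{0}$ has constant coefficients; hence $\omega_{t}^{sf}$ restricts there to a flat metric, i.e. $\omega_{t}^{sf}$ is semi-flat relative to $f_{t}$, and formula (\ref{e3.50}) together with the bound $C_{\mathrm{x},\mathrm{z}}$ makes the fiber diameters $O((-\log|t|)^{-1})\to0$. Choosing base points $p_{t}\in X_{t}\cap U$ over a point of $W_{U}=B_{I}\times(U\cap X_{0,I})$, the $C^{\nu}$-closeness together with the collapse of the fibers forces, for every sufficiently small $R>0$, the Gromov-Hausdorff convergence $(B_{\omega_{t_{k}}}(p_{t_{k}},R),\omega_{t_{k}})\to(B_{g_{W_{U}}}(p_{\infty},R),g_{W_{U}})$. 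Gromov's precompactness theorem and a diagonal argument then produce a complete pointed limit $(W,d_{W},p_{\infty})$ of a subsequence of $(X_{t_{k}},\omega_{t_{k}},p_{t_{k}})$ equipped with a local isometric embedding of $(B_{g_{W_{U}}}(p_{\infty},r),g_{W_{U}})$, pinning the Hausdorff dimension of $W$ to $\dim_{\mathbb{R}}W_{U}=\dim_{\mathbb{R}}B_{I}+2\dim_{\mathbb{C}}X_{0,I}$.

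The \emph{genuinely hard work} is already absorbed into Lemma \ref{le1} and Lemma \ref{le2}: the verification that the degenerate rescaling $P_{t}$ carries the background metrics and twisted volume forms to finite nondegenerate smooth limits, and the Evans-Krylov plus Schauder estimates that survive that rescaling and give $C^{\infty}_{loc}$-compactness of $\{\varphi_{t}\circ q_{\sigma}\circ P_{t}\}$. For the proposition itself the one delicate point is the identification of the Hausdorff dimension: one must check both that it does not drop --- which is why the local isometric embedding of the smooth model $(W_{U},g_{W_{U}})$ is needed, i.e. that $g_{0}$ in (\ref{e3.50}) is nondegenerate transverse to the torus fibers --- and that it does not remain $2n$, which is the fiber-collapse estimate. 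The remainder is bookkeeping: compatibility of the two limits and $G$-invariance of $\varphi_{0}$, so that the descent of $\tilde\omega^{o}+\ddbar\varphi_{0}$ to $X_{t}\cap U$ is legitimate.
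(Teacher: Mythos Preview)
Your proposal is correct and follows essentially the same route as the paper: the proposition is stated there explicitly as a summary of the preceding discussion, and you have recapitulated precisely those steps---invoke Lemmas \ref{le1} and \ref{le2}, descend $\tilde\omega^{o}+\ddbar\varphi_{0}$ via translation-invariance to obtain $\omega_{t}^{sf}$, deduce (\ref{e3.5}), read off semi-flatness and fiber collapse from (\ref{e3.50}), and then combine (\ref{e3.501}) with Gromov precompactness and the local isometric embedding of $(W_{U},g_{W_{U}})$ to pin the Hausdorff dimension. Your remark that $G$-invariance of $\varphi_{0}$ is needed for the descent through $q_{\sigma}$ is a point the paper leaves implicit; it is automatic here since the $G$-action lifts under $P_{t}$ to translations in $\operatorname{Im}(w_{j})$, which are already covered by the full $\mathbb{R}^{s}$-translation invariance.
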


  Now we are ready to prove  Theorem    \ref{main}.

\begin{proof}[Proof of Theorem    \ref{main}]
   Let $\{U_{\gamma}\}$ be an open  cover of $X_{0,I}^{o}$ such that any $U_{\gamma}\subset \mathcal{X}$ is isomorphic to an open subset of a toric variety, and does not intersect with  $\bigcup\limits_{i\notin I} X_{0,i}$.  By applying  the above arguments to  $U_{\gamma}$, we have  $W_{U_{\gamma}}=B_{I}  \times (U_{\gamma}\cap X_{0,I})\subset B_{I} \times \sqrt{-1}(\mathbb{R}^{s}/ (\frac{2\pi \mathbb{Z}^{s}}{\log |t|}))\times (U_{\gamma}\cap X_{0,I})$, and a metric $g_{U_{\gamma}}=g_{\gamma,0}|_{W_{U_{\gamma}}}$, where $g_{\gamma,0}$ is the  Riemannian metric given by (\ref{e3.50}).  If $\omega_{\gamma,t}^{sf}|_{X_{t}\cap U_{\gamma}}$ denotes the
     semi-flat K\"{a}hler-Einstein metric satisfying (\ref{e3.5}),   then, by Lemma \ref{le2}, $ P_{t}^{*}q_{\sigma}^{*} \omega_{\gamma,t}^{sf} $ is uniformly equivalent to $\tilde{\omega}^{o}$ on $ B_{I} \times \sqrt{-1}\mathbb{R}^{s}\times (U_{\gamma}\cap X_{0,I})$.  For any $U_{\gamma}$, since there are finite $U_{1},  \cdots,  U_{\varsigma}\in \{U_{\gamma}\}$ such that $(U\cup U_{1}\cup \cdots \cup U_{\varsigma}\cup U_{\gamma})\cap X_{t}$ is connected,  we have a  point $p_{t_{k},\gamma}\in X_{t_{k}}\cap U_{\gamma}$  such that ${\rm dist}_{\omega_{t_{k}}}(p_{t_{k},\gamma}, p_{t_{k}})\leq C_{\gamma}$ for a constant independent of $t_{k}$ by Proposition \ref{prop22}.    Thus there is a local isometric embedding $\iota_{\gamma}:(W_{U_{\gamma}}, g_{U_{\gamma}})\hookrightarrow (W, d_{W})$. Note that the restriction of  $g_{U_{\gamma}}$ on any $B_{I}  \times \{q\} $   is complete,  $g_{U_{\gamma}}|_{U_{\gamma}\cap X_{0,I}}$ is uniformly equivalent to $\tilde{\omega}^{o}|_{U_{\gamma}\cap X_{0,I}}=\tilde{\omega}_{0,I}|_{U_{\gamma}\cap X_{0,I}}$, and $\tilde{\omega}_{0,I}$ is complete on $X_{0,I}^{o}$ by (\ref{e2.5++}).   Therefore,  $\bigcup\limits_{\gamma} \iota_{\gamma}(W_{U_{\gamma}}, g_{U_{\gamma}})\subset (W, d_{W})$ is a complete Riemannian manifold, which implies that  $\bigcup\limits_{\gamma} \iota_{\gamma}(W_{U_{\gamma}}, g_{U_{\gamma}})= (W, d_{W})$.

Now we assume  $\dim_{\mathbb{C}}X_{0,I}=0$, i.e. $s=n$.
  Then $W_{U}=B_{I}$,  $\varphi_{0}=\varphi_{0}(x_{1}, \cdots, x_{n})$ is a function on $B_{I}$, and we denote $g_{B_{I}}=g_{W_{U}}$. We need the following lemma to finish the proof.

 \begin{lemma}\label{le3} If    $$\phi=\frac{\varphi_{0}}{2}-  \sum_{j=1}^{n}\log x_{j}-  \log(1-\sum_{j=1}^{n}x_{j}),  $$ then  $$g_{B_{I}}=\sum_{ij=1}^{n}\frac{\partial^{2} \phi}{\partial x_{i}\partial x_{j}}dx_{i}dx_{j}  $$ on $B_{I}$, and  $\phi$ is the unique solution of the  real Monge-Amp\`{e}re equation  $$ \det (\frac{\partial^{2} \phi}{\partial x_{i}\partial x_{j}})=\kappa e^{2 \phi},  \  \  \phi|_{\partial\overline{B}_{I}}=+\infty,  $$ for a constant $\kappa >0$, i.e.  $\phi$ is obtained  in Theorem \ref{CY}.
  \end{lemma}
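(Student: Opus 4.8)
The plan is to compute the Hessian of $\phi$ directly and recognize that the Monge--Amp\`ere equation \eqref{mae2} for $\varphi_{0}$ becomes, after the change of potential, exactly the Cheng--Yau equation \eqref{e0.1+}. First I would note that by \eqref{e3.50} with $s=n$ (so there is no $\mathcal{G}_{0}$ term and no $z$-dependence), the Riemannian metric of $\tilde{\omega}^{o}+\ddbar\varphi_{0}$ restricted to $B_{I}=\{\theta=0\}$ is
$$ g_{B_{I}}=\sum_{ij=1}^{n}\Big(\frac{\delta_{ij}}{x_{i}^{2}}+\frac{1}{(1-\sum_{k}x_{k})^{2}}+\frac{\partial^{2}\varphi_{0}}{2\,\partial x_{i}\partial x_{j}}\Big)dx_{i}dx_{j}. $$
Then I would compute the Hessian of $\phi=\tfrac{1}{2}\varphi_{0}-\sum_{j}\log x_{j}-\log(1-\sum_{k}x_{k})$ term by term: $\partial^{2}_{x_{i}x_{j}}(-\log x_{j})$ contributes $\delta_{ij}/x_{i}^{2}$, and $\partial^{2}_{x_{i}x_{j}}(-\log(1-\sum_{k}x_{k}))$ contributes $1/(1-\sum_{k}x_{k})^{2}$, so $\Hess\phi$ matches the bracketed matrix above exactly; this gives the first assertion $g_{B_{I}}=\sum_{ij}\partial^{2}_{x_{i}x_{j}}\phi\, dx_{i}dx_{j}$. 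In particular $\phi$ is convex since $g_{B_{I}}$ is a (positive definite) Riemannian metric.

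Next I would translate equation \eqref{mae2} into a real Monge--Amp\`ere equation for $\phi$. On $B_{I}\times\sqrt{-1}\RR^{n}$ the complex Hessian of $\varphi_{0}$ is $\partial^{2}_{w_{i}\bar w_{j}}\varphi_{0}=\tfrac14\partial^{2}_{x_{i}x_{j}}\varphi_{0}$, and similarly the K\"ahler form $\tilde{\omega}^{o}$ of Lemma \ref{le1}(ii) (with $s=n$, $\omega^{o}_{U,I}$ absent) has complex Hessian potential whose real Hessian is precisely $\sum_{ij}(\delta_{ij}/x_{i}^{2}+1/(1-\sum x_{k})^{2})dx_{i}dx_{j}$. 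Hence $\det\big((\tilde{\omega}^{o}+\ddbar\varphi_{0})_{i\bar j}\big)=4^{-n}\det(\partial^{2}_{x_{i}x_{j}}(2\phi))=\det(\partial^{2}_{x_{i}x_{j}}\phi)$. On the other side, the volume form $V_{0}'$ from Lemma \ref{le1}(i) with $s=n$ and no $V_{I}$ factor is $\frac{1}{4(1-\sum x_{k})^{2}}\prod_{j}\frac{dw_{j}\wedge d\bar w_{j}}{4x_{j}^{2}}$, so $e^{\varphi_{0}}V_{0}'$ has coefficient $\frac{e^{\varphi_{0}}}{4^{n+1}(1-\sum x_{k})^{2}\prod_{j}x_{j}^{2}}$ against $\prod_{j}(dw_{j}\wedge d\bar w_{j})$; using $\varphi_{0}=2\phi+2\sum_{j}\log x_{j}+2\log(1-\sum x_{k})$ this coefficient is $4^{-(n+1)}e^{2\phi}$. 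Matching the two sides of \eqref{mae2} (both written against $\prod_{j}(dw_{j}\wedge d\bar w_{j})$, with the standard combinatorial factor from $\omega^{n}/n!$) yields $\det(\partial^{2}_{x_{i}x_{j}}\phi)=\kappa e^{2\phi}$ for an explicit positive constant $\kappa$ (a power of $2$).

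It remains to verify the boundary condition $\phi|_{\partial\overline{B}_{I}}=+\infty$ and the uniqueness. The boundary blow-up follows because $\varphi_{0}$ is uniformly bounded on $B_{I}$ by Lemma \ref{le2} ($|\varphi_{0}|\le C_{3}$), while $-\sum_{j}\log x_{j}-\log(1-\sum_{k}x_{k})\to+\infty$ as $x$ approaches any face of the simplex $\partial\overline{B}_{I}$; adding a bounded function to a function tending to $+\infty$ still tends to $+\infty$. Finally, by Theorem \ref{CY} there exists a (convex, complete-metric-inducing) solution of \eqref{e0.1+} with the same $\kappa$, and by the uniqueness statement recorded after \eqref{e0.1} (Proposition 5.5 in \cite{CY}, see also \cite{Hild}) the solution with $\phi|_{\partial\overline{B}_{I}}=+\infty$ is unique; hence our $\phi$ coincides with the Cheng--Yau potential and $g_{B_{I}}=g_{B_{I}}^{\mathrm{CY}}$ as claimed. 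I expect the only mildly delicate point to be bookkeeping the numerical constant $\kappa$ and the $n!$-type combinatorial factors relating $\det$ of the complex Hessian, the top power of the K\"ahler form, and the coordinate volume form; the structural identification is immediate from the definition of $\phi$.
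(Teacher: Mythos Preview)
Your approach is essentially identical to the paper's: compute the Hessian of $\phi$ directly to match the bracketed coefficients in \eqref{e3.50}, rewrite \eqref{mae2} as the real Monge--Amp\`ere equation for $\phi$, and then invoke Proposition~5.5 of \cite{CY} for uniqueness. One minor correction: the constant $\kappa$ is not merely a power of $2$ --- in the paper's computation it also carries the positive constant $\eta'$ coming from the volume form in Lemma~\ref{le1}(i) (when $s=n$ the factor $V_{I}$ does not disappear but reduces to a positive constant) --- though this is irrelevant to the argument since only $\kappa>0$ is needed; your explicit verification of $\phi|_{\partial\overline{B}_{I}}=+\infty$ via $|\varphi_{0}|\le C_{3}$ is a welcome addition that the paper leaves implicit.
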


 \begin{proof} Note that  $\frac{\partial \varphi_{0}}{\partial w_{j}}=\frac{\partial \varphi_{0}}{2\partial x_{j}}$, and $$  \frac{\partial^{2} \phi}{\partial x_{i}\partial x_{j}}=\frac{\partial^{2} \varphi_{0}}{2\partial x_{i}\partial x_{j}}+\frac{\delta_{ij}}{x_{j}^{2}}+\frac{1}{ (1-\sum\limits_{j=1}^{n}x_{j})^{2}}. $$
   By (\ref{e3.50}),  $g_{B_{I},ij}=\frac{\partial^{2} \phi}{\partial x_{i}\partial x_{j}}$.
   By Lemma \ref{le1} and Lemma \ref{le2},   we have $$\det (\frac{ \delta_{ij}}{x_{j}^{2}}+\frac{1}{(1-\sum\limits_{j=1}^{n}x_{j})^{2}} + \frac{\partial^{2} \varphi_{0}}{2\partial x_{i}\partial x_{j}})=\frac{e^{\varphi_{0}}2^{n} \eta'}{4^{n+1}(1-\sum\limits_{j=1}^{n}x_{j})^{2}\prod\limits_{j=1}^{n} x_{j}^{2}},  $$ where $\eta'>0$  is a constant.

 Now  Proposition 5.5 in \cite{CY} implies that $\varphi_{0}$ is the unique solution of (\ref{mae2}), which implies the uniqueness of $\phi$.
  \end{proof}

 Note that $g_{B_{I}}$ is a complete metric on $B_{I}$, and thus $(W, d_{W})$ is isometric to  $(B_{I},g_{B_{I}}) $.
By the uniqueness of $g_{B_{I}}$, we have the convergence of Proposition \ref{prop22} without passing to any   sequence $t_{k}$. We obtain the conclusion.
  \end{proof}

\section{Proof of Theorem    \ref{main2}}

 \begin{proof}[Proof of Theorem    \ref{main2}]
 Since $\mathcal{K}_{\mathcal{X}/\Delta}$   is ample,  there is  a section $\Omega_{t}$ of $\mathcal{K}_{\mathcal{X}/\Delta}$ such that $D \cap X_{0,I}=\emptyset$ where $D={\rm div}(\Omega_{t})$.
 Let  $U\subset \mathcal{X}$ be a neighborhood of $X_{0,I}$   isomorphic to an open subset of a toric variety  $ \mathcal{Y}_{\sigma}$, denoted still by $U$, such that $U\cap  X_{0,I'}$ is empty  for any $I'   \nsubseteq   I=\{1,\cdots, s+1\}$.  We assume that $D\cap U=\emptyset$ by shrinking $U$ if necessary.

  We adopt the construction  in Section 3.2. There is a toric variety $\mathcal{Y}_{\sigma}'\cong \mathbb{C}^{n+1}$ with coordinates $z_{0}, \cdots, z_{n}$, and a finite group $G=N/ N'$ acting on $\mathcal{Y}_{\sigma}'$.   Let $q_{\sigma}: \mathcal{Y}_{\sigma}'\rightarrow \mathcal{Y}_{\sigma}$ be the finite quotient by $G=N/ N'$, and $Y_{\sigma,t}\subset \mathcal{Y}_{\sigma}$ such that $Y_{\sigma, t}\cap U=X_{t}\cap U $ as in Section 3.2. Recall that  $q_{\sigma}^{-1}(Y_{\sigma,t})$ is given by $z_{0}\cdot\cdots\cdot z_{n}=t$, and $q_{\sigma}^{-1}( Y_{\sigma,t}\cap U)\subset B_{I}\times\sqrt{-1}(\mathbb{R}^{n}/\frac{2 \pi \mathbb{Z}^{n}}{\log |t|} )\subset q_{\sigma}^{-1}( Y_{\sigma,t})$, where $B_{I}$ is given by (\ref{e3.01}).

 For a $p=(p_{1}, \cdots, p_{n})\in B_{I}$, we define  an embedding $$\mathfrak{i}_{t}: B_{I}\times \sqrt{-1}(\mathbb{R}^{n}/\frac{2\pi \mathbb{Z}^{n}}{\log |t|} )\hookrightarrow \mathbb{C}^{n}/(2\pi\sqrt{-1}\mathbb{Z}^{n})=Y_{\infty}$$ by
     letting $\tilde{w}_{j}=(\log |t|)(w_{j}-p_{j})$, $j=1, \cdots, n$. We identify $B_{I}\times \sqrt{-1}(\mathbb{R}^{n}/\frac{2\pi \mathbb{Z}^{n}}{\log |t|} )$ with  the image $\mathfrak{i}_{t}(B_{I}\times \sqrt{-1}(\mathbb{R}^{n}/\frac{2\pi \mathbb{Z}^{n}}{\log |t|} ))\subset Y_{\infty}$ by $\mathfrak{i}_{t}$ without any confusion.

Assume that  $\tilde{\lambda}_{t}=\tilde{\lambda}_{t}(w_{1},\cdots,w_{n})$ is a family of functions convergence smoothly  to $\tilde{\lambda}_{0}=\tilde{\lambda}_{0}(x_{1},\cdots,x_{n})$ under the coordinates $w_{1},\cdots,w_{n}$ when $t\rightarrow 0$, i.e. $\frac{\partial^{k} \tilde{\lambda}_{t}}{\partial w_{j_{1}}^{k_{1}}\cdots \partial w_{j_{m}}^{k_{m}}}\rightarrow \frac{\partial^{k} \tilde{\lambda}_{0}}{\partial w_{j_{1}}^{k_{1}}\cdots \partial w_{j_{m}}^{k_{m}}}=\frac{\partial^{k} \tilde{\lambda}_{0}}{2^{k}\partial x_{j_{1}}^{k_{1}}\cdots \partial x_{j_{m}}^{k_{m}}}$.  Since  $w_{j}=p_{j}+(\log|t|)^{-1}\tilde{w}_{j}$ and $\frac{\partial \tilde{\lambda}_{t}}{\partial \tilde{w}_{j}}=(\log|t|)^{-1}\frac{\partial \tilde{\lambda}_{t}}{\partial w_{j}}$, we have $\tilde{\lambda}_{t} \rightarrow \tilde{\lambda}_{0} (p_{1}, \cdots, p_{n})$ in the $C^{\infty}$-sense on any
         on any compact subset  $K'\subset \mathfrak{i}_{t}( B_{I}\times \sqrt{-1}(\mathbb{R}^{n}/\frac{2\pi \mathbb{Z}^{n}}{\log |t|} )) \subset Y_{\infty}$, when $t\rightarrow 0$.

 Since $\mathcal{Y}_{\sigma}$ has only Gorenstein orbifold singularities, for the local  generator $\Omega_{\sigma}\in \mathcal{O}(\mathcal{K}_{\mathcal{Y}_{\sigma}})$, $q_{\sigma}^{*}\Omega_{\sigma}$ is a $G$-invariant  no-where vanishing holomorphic $(n+1,0)$-form on $\mathcal{Y}'_{\sigma}$, and thus $$q_{\sigma}^{*}\Omega_{t}=\Omega_{\sigma} \otimes (dt)^{-1}=  \zeta  \frac{dz_{1}\wedge \cdots \wedge  dz_{n}}{z_{1}\cdot\cdots\cdot z_{n} },$$ on $q_{\sigma}^{-1}(X_{t} \cap U)$,  where $\zeta>0$ is a holomorphic function on $\mathcal{Y}'_{\sigma}$.
   Note that $\zeta (w_{1}, \cdots, w_{n}) \rightarrow \zeta(0)$ in the $C^{\infty}$-sense by the argument in the proof of Lemma \ref{le1}. Thus
    $$q_{\sigma}^{*}\Omega_{t}=\zeta   d\tilde{w}_{1}\wedge \cdots \wedge  d\tilde{w}_{n} \rightarrow \Omega_{\infty}= \zeta (0)  d\tilde{w}_{1}\wedge \cdots \wedge  d\tilde{w}_{n}, $$ in the  $C^{\infty}$-sense, when $t\rightarrow 0$.

            If we denote $L_{0}=\{0\}\times \sqrt{-1}(\mathbb{R}^{n}/(2\pi \mathbb{Z}^{n} ) )$, then for any $|t|\ll 1$, there is a $\vartheta_{t}\in \mathbb{R}$ such that $e^{\sqrt{-1}\vartheta_{t}}\int_{L_{0}}q_{\sigma}^{*}\Omega_{t}\in\mathbb{ R}$, which implies that $\int_{L_{0}} {\rm Im}( e^{\sqrt{-1}\vartheta_{0}}\Omega_{\infty})=0$, $\int_{L_{0}} {\rm Im}( e^{\sqrt{-1}\vartheta_{t}}q_{\sigma}^{*}\Omega_{t})=0$,   and   $[ {\rm Im}(e^{\sqrt{-1}\vartheta_{t}}q_{\sigma}^{*}\Omega_{t})|_{L_{0}}]=0$ in $H^{n}(L_{0}, \mathbb{R})$.
            Since $e^{\sqrt{-1}\vartheta_{0}}\zeta(0)$ is a constant, we have   $$ {\rm Im}( e^{\sqrt{-1}\vartheta_{0}}\Omega_{\infty})|_{L_{0}}={\rm Im}( e^{\sqrt{-1}\vartheta_{0}} \zeta (0)  d\tilde{w}_{1}\wedge \cdots \wedge  d\tilde{w}_{n})|_{L_{0}} \equiv 0.$$

   By Lemma \ref{le1} and Lemma \ref{le2}, $$(\log |t|)^{2}q_{\sigma}^{*} \omega_{t}\rightarrow \frac{\sqrt{-1}}{2} \sum_{ij=1}^{n} ( \frac{ \delta_{ij}}{p_{j}^{2}}+ \frac{1}{(1-\sum\limits_{j=1}^{s}p_{j})^{2} }+\frac{\partial^{2} \varphi_{0}}{2 \partial x_{i} \partial x_{j}}(p) )   d\tilde{w}_{i}\wedge  d\bar{\tilde{w}}_{j}= \omega_{\infty},$$ in the $C^{\infty}$-sense on any compact subset $K'$  on $Y_{\infty}$, when $t\rightarrow 0$. Since the curvature of $ \omega_{t}$ are uniformly bounded independent of $t$, we have that
        $\omega_{\infty}$ is a flat   metric  on  $ Y_{\infty}$.
    A direct calculation shows  $\omega_{\infty}|_{L_{0}}\equiv 0$.
     Note that for any $A\in H_{2}(L_{0}, \mathbb{Z})$,
     $$ |(\log |t|)^{2}\int_{A}q_{\sigma}^{*}\omega_{t}|\rightarrow |\int_{A}\omega_{\infty}|=0,  \ \ {\rm and} $$ $$  \int_{A}q_{\sigma}^{*}\omega_{t}=2\pi  \int_{q_{\sigma}(A)}c_{1}(\mathcal{K}_{X_{t}}) \in 2 \pi\mathbb{Z}. $$ Thus $\int_{A}q_{\sigma}^{*}\omega_{t}=0$,  and  $[ q_{\sigma}^{*} \omega_{t}|_{L_{0}}]=0$ in $H^{2}(L_{0}, \mathbb{R})$.
  By Theorem 10.8 in \cite{GHJ}, we obtain  a family of generalized special lagrangian submanifolds $\tilde{L}_{t} \subset  B_{I}\times \sqrt{-1}(\mathbb{R}^{n}/\frac{2\pi \mathbb{Z}^{n}}{\log |t|} )$ respecting to $q_{\sigma}^{*}\omega_{t}$ and $e^{\sqrt{-1}\vartheta_{t}}q_{\sigma}^{*}\Omega_{t}$ for $|t|\ll 1$. We obtain the conclusion by letting  $L_{t}= q_{\sigma}(\tilde{L}_{t})$.
  \end{proof}

\end{document}